\definecolor{darkgoldenrod}{rgb}{0.72, 0.53, 0.04}
\definecolor{vegasgold}{rgb}{0.77, 0.7, 0.35}
\definecolor{gold(metallic)}{rgb}{0.83, 0.69, 0.22}
\definecolor{sepia}{rgb}{0.44, 0.26, 0.08}
\definecolor{silver}{rgb}{0.75, 0.75, 0.75}
\DeclareSymbolFont{cyrletters}{OT2}{wncyr}{m}{n}
\DeclareMathSymbol{\Sha}{\mathalpha}{cyrletters}{"58}
\numberwithin{equation}{section}
\DeclareSymbolFont{cyrletters}{OT2}{wncyr}{m}{n}
\DeclareMathSymbol{\Sha}{\mathalpha}{cyrletters}{"58}
\newcommand{\Z}{\mathbb{Z}}
\newcommand{\Q}{\mathbb{Q}}
\newcommand{\op}[1]{\operatorname{#1}}
\newcommand{\F}{\mathbb{F}}
\newcommand{\Zx}{\mathbb{Z}_p\llbracket x\rrbracket}
\theoremstyle{plain}
 \theoremstyle{definition}
\newtheorem{Th}{Theorem}[section]
\newtheorem{Lemma}[Th]{Lemma}
\newtheorem{Corollary}[Th]{Corollary}
\newtheorem{Remark}[Th]{Remark}
 \theoremstyle{definition}
\newtheorem{Definition}[Th]{Definition}
\begin{document}

\title[Points on algebraic curves in infinite towers]{Rational points on algebraic curves in infinite towers of number fields}

\author[A.~Ray]{Anwesh Ray}
\address{Department of Mathematics\\
University of British Columbia\\
Vancouver BC, Canada V6T 1Z2}
\email{anweshray@math.ubc.ca}

\subjclass[2010]{11G10, 11G30, 11R23 (primary), 14H40 (secondary)}
\keywords{Iwasawa theory, Rational points of algebraic curves}

\begin{abstract} We study a natural question in the Iwasawa theory of algebraic curves of genus $>1$. Fix a prime number $p$. Let $X$ be a smooth, projective, geometrically irreducible curve defined over a number field $K$ of genus $g>1$, such that the Jacobian of $X$ has good ordinary reduction at the primes above $p$. Fix an odd prime $p$ and for any integer $n>1$, let $K_n^{(p)}$ denote the degree-$p^n$ extension of $K$ contained in $K(\mu_{p^{\infty}})$. We prove explicit results for the growth of $\#X(K_n^{(p)})$ as $n\rightarrow \infty$. When the Jacobian of $X$ has rank zero and the associated adelic Galois representation has big image, we prove an explicit condition under which $X(K_{n}^{(p)})=X(K)$ for all $n$. This condition is illustrated through examples. We also prove a generalization of Imai's theorem that applies to abelian varieties over arbitrary pro-$p$ extensions.
\end{abstract}

\maketitle
\section{Introduction}
\par Given an elliptic curve $E$ defined over a number field $K$, the Mordell--Weil group $E(K)$ is a finitely generated abelian group. The Iwasawa theory of elliptic curves was initiated by Mazur in \cite{mazur1972rational}, who studied the growth of the Mordell--Weil ranks of an elliptic curve in certain infinite towers of number fields. Given a prime $p$, let $K_n^{(p)}$ denote the Galois extension of $K$ with $[K_n^{(p)}:K]=p^n$ which is contained in the infinite cyclotomic extension $K(\mu_{p^\infty})$. It was shown by Kato \cite{kato2004p} and Rohlrich \cite{rohrlich1984onl} that if $E$ is defined over $\Q$ and $K$ is any abelian number field, then, $\op{rank} E(K_n^{(p)})$ is bounded as $n\rightarrow \infty$. The study of such questions led to the development of the Iwasawa theory of elliptic curves, and it was in this context that Mazur studied \emph{control theorems} for Selmer groups. With the development of non-abelian Iwasawa theory, such results have subsequently been extended to more general $p$-adic Lie-extensions, in which the asymptotic growth of Mordell--Weil ranks is studied, see for instance, \cite{harris1979systematic,lei2020ranks, delbourgo2017estimating, HL, ray2021asymptotic}. The study of growth questions in towers is an active area of research and is intimately related to Iwasawa theoretic properties of Selmer groups.

\par Given the interest in studying questions about the growth of ranks of elliptic curves, it is natural to ask similar questions for curves of higher genus. A curve $X_{/K}$ is said to be \emph{nice} if it is smooth, projective and geometrically irreducible. In this context, the celebrated theorem of Faltings \cite{faltings1983finity} states that if $X_{/K}$ is an nice curve of genus $g>1$, then the set of rational points $X(K_n^{(p)})$ is finite for all $n$. A natural question is to characterize the growth of $\# X(K_n^{(p)})$ as $n\rightarrow \infty$. It follows from results of Mazur \cite{mazur1972rational} and Imai \cite{imai1975remark} that $\# X(K_n^{(p)})$ is bounded provided some additional conditions are satisfied (see Theorem \ref{main th}). This requires that $p\geq 5$, that the Jacobian of $X$ has good ordinary reduction at all the primes of $K$ that lie above $p$, and the $p$-primary Selmer group of the Jacobian (over $K_{\op{cyc}}^{(p)}$) is cotorsion over the Iwasawa algebra. The last condition is conjectured by Mazur in \emph{loc. cit.}, and is known in the case when the Mordell--Weil rank of the Jacobian over $K$ is zero, and its Tate--Shafarevich group over $K$ is finite (see Remark \ref{cotorsion remark} and Theorem \ref{cotorsion zero}). This latter is verified in an explicit example, see Example \ref{example}.

\par We prove more explicit results regarding the growth of points in the cyclotomic tower. We prove an upper bound for the minimum value $m_0=m_0(p)$ such that $X(K_n^{(p)})=X(K_{m_0}^{(p)})$ for all $n>m_0$. Using this bound, we prove results about the variation of $m_0(p)$ as $p\rightarrow \infty$. This relies on explicitly understanding the torsion in the Jacobian of $X$ over the cyclotomic $\Z_p$-extension. We then consider the case when $\op{Jac}(X)$ has Mordell--Weil rank zero and establish an explicit criterion for $X(K_n^{(p)})$ to be equal to $X(K)$ for all $n$, see Theorem \ref{main 2}. This criterion indicates that for $100\%$ of primes $p$ above which the Jacobian of $X$ has good ordinary reduction, $X(K_{\op{cyc}}^{(p)})=X(K)$, in the rank zero setting.
\par We also obtain a generalization of Imai's theorem for all pro-$p$ extensions, see Theorem \ref{generalization of Imai}. The result states that if $A_{/K}$ is an abelian variety for which the associated adelic Galois representation has big image, and the $p$-torsion subgroup of $A(K)$ is trivial, then the torsion subgroup of $A(K_\infty)$ is finite, for all infinite pro-$p$ extensions $K_\infty/K$. This result makes no assumption on the reduction-type of $A$ at the primes above $p$. According to a theorem of Serre and Pink (see Theorem \ref{serre pink}), the big image assumption is satisfied for a large number of abelian varieties. In addition to this, we prove a boundedness result for the torsion subgroup of $A(K_\infty)$ for all pro-$p$ extensions $K_\infty/K$, as $p\rightarrow \infty$, see Theorem \ref{boundedness}. We also have an explicit example to illustrate this, see Example \ref{example}. Specializing to the case when $A$ is the Jacobian of a curve $X$, the result implies that if the rank of $A$ is bounded in the number fields contained in $K_\infty$, then $X(K_\infty)$ is finite.

\par\emph{Organization:} Including the introduction, the paper consists of four sections. In section \ref{section: preliminaries}, we introduce preliminary notions from Iwasawa theory. In section \ref{s 3}, we prove results about the growth of rational points in cyclotomic towers, namely Theorems \ref{main th}, \ref{th 2}, \ref{generalization of Imai} and \ref{boundedness}. In section \ref{s 4}, we establish a criterion for $X(K)=X(K_n^{(p)})$ for all $n$ in the setting when $\op{Jac}(X)$ has rank zero over $K$, and the adelic Galois representation has big image, see Theorem \ref{main 2}. 

\subsection*{Acknowledgements} The author would like to thank Jeffrey Hatley, Antonio Lei, Larry Washington and Tom Weston for helpful comments. The author is especially grateful to Ananth N.~Shankar for insightful suggestions including the idea used in the proof of Theorem \ref{main th}. The author would also like to thank the anonymous referee for careful and timely reading of the manuscript.
\section{Preliminaries}
\label{section: preliminaries}

Throughout, $p$ will be a prime number such that $p\geq 5$ and $K$ a number field. Let $X$ be a curve over $K$ of genus $g>1$. Assume that $X$ is \emph{nice} i.e., smooth, projective and irreducible over $\bar{K}$. Let $A$ denote the Jacobian of $X$ and assume that $A$ has good ordinary reduction at all primes above $p$.

\subsection{Galois representations and monodromy groups}

\par We introduce the \emph{adelic Galois representation} $\widehat{\rho}$ attached to $A$. For $m\in \Z_{\geq 1}$, we let $A[m]$ be the $m$-torsion subgroup of $A(\bar{K})$. At a prime $\ell$, the $\ell$-adic Tate-module is the inverse limit with respect to multiplication by $\ell$ maps
\[\op{T}_\ell(A):=\varprojlim_n A[\ell^n].\] On the other hand, the \emph{big Tate-module} is the inverse limit 
\[\op{T}(A):=\varprojlim_m A[m]=\prod_\ell \op{T}_\ell(A).\]The Galois action on $A[m]$ coincides with a representation \[\rho_{m}:\op{Gal}(\bar{K}/K)\rightarrow \op{GSp}_{2g}(\Z/m\Z).\] Let $\widehat{\rho}:\op{Gal}(\bar{K}/K)\rightarrow \op{GSp}_{2g}(\widehat{\Z})$ be the Galois representation on $\op{T}(A)$, and the image of $\widehat{\rho}$ is denoted $G\subset \op{GSp}_{2g}(\widehat{\Z})$. On the other hand the image of the $\ell$-adic Galois representation $\rho_{\ell^\infty}: \op{Gal}(\bar{K}/K)\rightarrow \op{GSp}_{2g}(\Z_\ell)$ is denoted $G_\ell$. The group $G$ (resp. $G_\ell$) is referred to as the \emph{big} (resp. \emph{$\ell$-adic}) \emph{monodromy group}. Note that $\rho_\ell$ is the mod-$\ell$ reduction of $\rho_{\ell^\infty}$, denote its image by $\bar{G}_\ell$.
\begin{Definition}\label{big image def}
The representation $\widehat{\rho}$ is said to have \emph{big image} if $G\cap \op{Sp}_{2g}(\widehat{\Z})$ is a finite index subgroup of $\op{Sp}_{2g}(\widehat{\Z})$.
\end{Definition}
\begin{Remark}
We do \emph{not} impose the big image condition in the main result, however, the hypothesis is used in proving a refined result in the special case when the Jacobian has rank zero, see Theorem \ref{main 2}.
\end{Remark}
The following result is due to Serre \cite{serre2000letter, serre2000resume} and Pink \cite{pink1998}.
\begin{Th}[Serre, Pink]\label{serre pink}
Let $A_{/\Q}$ be an abelian variety and assume that $\op{End}(A_{/\bar{\Q}})=\Z$. Then the image of $\widehat{\rho}$ contains a finite index subgroup of $\op{GSp}_{2g}(\widehat{\Z})$ provided $g=1,2$, or $g\geq 3$ is not in the set
\[\left\{\frac{1}{2}(2n)^k\mid n>0, k\geq 3 \text{ is odd}\right\}\cup\left\{\frac{1}{2}{{2n}\choose{n}}\mid n\geq 3 \text{ is odd}\right\} =\{4,10,16,32,\dots\}.\]
\end{Th}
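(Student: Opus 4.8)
The plan is to reduce this adelic statement to two families of local facts together with an independence argument. Concretely, it suffices to show: (i) for every prime $\ell$, the $\ell$-adic image $G_\ell$ is open in $\op{GSp}_{2g}(\Z_\ell)$; (ii) for all but finitely many $\ell$, the mod-$\ell$ image $\bar{G}_\ell$ contains $\op{Sp}_{2g}(\F_\ell)$; and (iii) the images attached to distinct primes are independent enough that $G\cap\op{Sp}_{2g}(\widehat{\Z})$ agrees, up to finite index, with $\prod_\ell\left(G_\ell\cap\op{Sp}_{2g}(\Z_\ell)\right)$. The role of the hypothesis $\op{End}(A_{/\bar{\Q}})=\Z$ is to pin down the Mumford--Tate group $\op{MT}(A)$, and the dimension restriction is precisely what forces $\op{MT}(A)=\op{GSp}_{2g}$.

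First I would carry out the Hodge-theoretic reduction. The Hodge group $H=\op{MT}(A)^{\op{der}}$ is a connected semisimple $\Q$-group acting faithfully on $V:=H_1(A(\mathbb{C}),\Q)$, a $2g$-dimensional space carrying the symplectic form induced by a polarization; since $A$ gives a weight-one Hodge structure, the Hodge cocharacter acts on $V$ with weights $0$ and $1$ only, so $V$ is a minuscule representation of $H$. The vanishing of nontrivial endomorphisms forces $V$ to be absolutely irreducible as an $H$-representation, so I would invoke the classification of faithful irreducible minuscule symplectic representations of semisimple groups: apart from the standard representation of $\op{Sp}_{2g}$ itself, the only possibilities arise from odd external tensor products $W_1\boxtimes\dots\boxtimes W_k$ of $2n$-dimensional symplectic minuscule representations with $k\geq 3$ odd, of total dimension $(2n)^k$, and from the middle exterior power $\wedge^{n}(\mathbb{C}^{2n})$ of $\op{SL}_{2n}$ with $n\geq 3$ odd, of dimension $\binom{2n}{n}$. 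Taking half the dimension of $V$ recovers exactly the exceptional set $\{4,10,16,32,\dots\}$, so when $g$ avoids this set the classification leaves only $H=\op{Sp}_{2g}$, whence $\op{MT}(A)=\op{GSp}_{2g}$.

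Next I would establish the $\ell$-adic openness in (i). By Faltings' semisimplicity theorem the Zariski closure $G_\ell^{\op{Zar}}$ of $G_\ell$ is reductive, and by Bogomolov's theorem $G_\ell$ is open in $G_\ell^{\op{Zar}}(\Q_\ell)$. The Mumford--Tate conjecture, which Serre and Pink establish in the relevant cases, identifies the identity component of $G_\ell^{\op{Zar}}$ with $\op{MT}(A)_{\Q_\ell}=\op{GSp}_{2g,\Q_\ell}$; combining these gives that $G_\ell$ is open in $\op{GSp}_{2g}(\Q_\ell)$ for every $\ell$. For step (ii) I would produce, for almost all $\ell$, a transvection inside $\bar{G}_\ell$ (for instance from the image of inertia at a prime of bad reduction) and then apply the group-theoretic fact that an irreducible subgroup of $\op{Sp}_{2g}(\F_\ell)$ containing a transvection is all of $\op{Sp}_{2g}(\F_\ell)$; irreducibility for almost all $\ell$ is itself a consequence of the bigness of $\op{MT}(A)$.

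Finally, with (i) and (ii) in hand, the adelic conclusion (iii) follows from an independence argument in the style of Goursat and Ribet: for all large $\ell$ the groups $\op{PSp}_{2g}(\F_\ell)$ are simple and pairwise non-isomorphic, so no common quotient can obstruct surjectivity, and the image in $\prod_{\ell\gg 0}\op{Sp}_{2g}(\F_\ell)$ is the full product. Together with the openness at the finitely many remaining primes this shows $G\cap\op{Sp}_{2g}(\widehat{\Z})$ has finite index in $\op{Sp}_{2g}(\widehat{\Z})$; since the similitude character of $\widehat{\rho}$ is the cyclotomic character, which surjects onto $\widehat{\Z}^{\times}$, one concludes that $G$ is open in $\op{GSp}_{2g}(\widehat{\Z})$. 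I expect the two principal obstacles to be the uniform mod-$\ell$ surjectivity in (ii), which demands genuinely uniform control of the Galois image across all primes, and the Mumford--Tate conjecture input used in (i); these are exactly the points where the deep work of Serre and Pink is indispensable.
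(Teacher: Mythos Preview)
The paper does not supply its own proof of this theorem: it is stated as a result of Serre and Pink, with citations to \cite{serre2000letter, serre2000resume} and \cite{pink1998}, and no argument is given in the text. Consequently there is nothing in the paper to compare your sketch against; your proposal is an outline of the content of those references rather than of anything the author proves.

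As an outline of the Serre--Pink argument your sketch is broadly on target: the identification $\op{MT}(A)=\op{GSp}_{2g}$ via the classification of irreducible minuscule symplectic representations is precisely what produces the excluded set of dimensions, and the passage from $\ell$-adic openness for each $\ell$ to adelic openness via a Goursat--Ribet independence argument is the standard route. One caution concerns your step~(ii): producing a transvection from inertia at a prime of bad reduction presupposes that $A$ has a place of semistable toric reduction, which is not part of the hypotheses, so this cannot be the mechanism in general. In the cited works the uniform mod-$\ell$ largeness is obtained instead from the determination of the Mumford--Tate group together with Serre's theory of Frobenius tori and group-scheme arguments (and, in Pink's treatment, an analysis of the algebraic monodromy groups), rather than from an explicit transvection. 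Apart from that, your sketch captures the architecture of the argument correctly.
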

Note that $G$ is closed, and if $\widehat{\rho}$ has big image, then $G':=G\cap \op{Sp}_{2g}(\widehat{\Z})$ is an open subgroup of $\op{Sp}_{2g}(\widehat{\Z})$. As a result, $G_\ell$ contains $\op{Sp}_{2g}(\Z_\ell)$ for all but finitely many primes $\ell$.

\subsection{Selmer groups and number field towers}

\par The results proven in this subsection shall apply to all abelian varieties $A$ with good ordinary reduction at the primes of $K$ that lie above $p$, however the case of interest is when $A$ is the Jacobian of the curve $X$ from the previous subsection. We introduce the main object of interest in Iwasawa theory, namely, the $p$-primary Selmer group over the cyclotomic $\Z_p$-extension. For $n\in \Z_{\geq 1}$ set $\Q_n^{(p)}$ to be the unique extension of $\Q$ contained in $\Q(\mu_{p^{n+1}})$ such that $[\Q_n^{(p)}:\Q]=p^n$. Let $\Q_{\op{cyc}}^{(p)}$ be the infinite extension obtained by taking the union and set $K_{\op{cyc}}^{(p)}:=K\cdot \Q_{\op{cyc}}^{(p)}$. It is easy to show that $\Gamma:=\op{Gal}(K_{\op{cyc}}^{(p)}/K)$ is isomorphic to $\Z_p$. For $n\in \Z_{\geq 0}$, let $K_n^{(p)}$ be the extension of $K$ contained in $K_{\op{cyc}}^{(p)}$ such that $[K_n^{(p)}:K]=p^n$. The tower of number fields
\[K=K_0^{(p)}\subseteq K_1^{(p)}\subseteq K_2^{(p)}\subseteq \dots \subseteq K_n^{(p)}\subseteq \dots\] is called the cyclotomic tower.
\par Throughout, we pick a topological generator $\gamma$ of $\Gamma$. The Iwasawa algebra $\Lambda$ is defined to be the inverse limit of finite group rings 
\[\Lambda:=\varprojlim_n \Z_p[\op{Gal}(K_n^{(p)}/K)].\] There is an isomorphism of $\Lambda$ with the formal power series ring $\Z_p\llbracket x\rrbracket $ which takes $\gamma$ to $x+1$. From here on in, we choose such an isomorphism and identify $\Lambda$ with the formal power series ring $\Zx$.
\par Given a number field $F/K$, let $\op{Sel}_{p^\infty}(A/F)$ be the $p$-primary Selmer group of $A$ over $F$. We recall the definition here. Let $S$ be the set of primes $v$ of $F$ such that either $v|p$, or, $A$ has bad reduction at $v$. Let $F_S$ be the maximal algebraic extension of $F$ in which all primes $v\nmid S$ are unramified. Denote by $F_v$ the completion of $F$ at $v$. The Kummer sequence 
\[0\rightarrow A[p^n]\rightarrow A\rightarrow A\rightarrow 0\] induces a map in cohomology 
\[H^1(F_v, A[p^n])\rightarrow H^1(F_v, A)[p^n].\] Passing to the direct limit as $n\rightarrow \infty$, we have the following map
\[H^1(F_v, A[p^\infty])\rightarrow H^1(F_v, A)[p^\infty].\]
The Selmer group is defined as follows
\[\op{Sel}_{p^\infty}(A/F):=\op{ker} \left\{H^1(F_S/F, A[p^\infty])\longrightarrow \bigoplus_{v\in S} H^1(F_v, A)[p^\infty]\right\},\] where the above restriction map factors through $\bigoplus_{v\in S} H^1(F_v, A[p^\infty])$.

\par Let $\Sha(A/F)$ be the \emph{Tate-Shafarevich group}, defined as follows
\[\Sha(A/F):=\left\{H^1(\bar{F}/F, A(\bar{F}))\rightarrow \prod_{l} H^1(\bar{F}_v/F_v, A(\bar{F}_v))\right\}.\]The Selmer group $\op{Sel}_{p^\infty}(A/F)$ fits into a short exact sequence 
\begin{equation}\label{selmer ses}0\rightarrow A(F)\otimes \Q_p/\Z_p\rightarrow \op{Sel}_{p^\infty}(A/F)\rightarrow \Sha(E/F)[p^\infty]\rightarrow 0.\end{equation}
\par The Selmer group over $K_{\op{cyc}}^{(p)}$ is taken as the direct limit over restriction maps
\[\op{Sel}_{p^\infty}(A/K_{\op{cyc}}^{(p)}):=\varinjlim_n \op{Sel}_{p^\infty}(A/K_n^{(p)}).\]

It is well known that $\op{Sel}_{p^\infty}(A/K_{\op{cyc}}^{(p)})$ is a cofinitely generated module over the Iwasawa-algebra $\Zx$, i.e., its Pontryagin dual 
\[\op{Sel}_{p^\infty}(A/K_{\op{cyc}}^{(p)})^{\vee}:=\op{Hom}\left(\op{Sel}_{p^\infty}(A/K_{\op{cyc}}^{(p)}), \Q_p/\Z_p\right)\]is finitely generated over $\Zx$.

\begin{Th}\label{cotorsion zero}
Suppose that $\op{Sel}_{p^\infty}(A/K)$ is finite, i.e., $\op{rank} A(K)=0$ and $\Sha(E/K)[p^\infty]$ is finite. Then, the Selmer group $\op{Sel}_{p^\infty}(A/K_{\op{cyc}}^{(p)})$ is cotorsion over $\Zx$.
\end{Th}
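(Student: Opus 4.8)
The plan is to reduce the cotorsion statement to a finiteness assertion about $\Gamma$-invariants, proved via a control theorem, and then to invoke the structure theory over $\Lambda \cong \Zx$. Write $M := \op{Sel}_{p^\infty}(A/K_{\op{cyc}}^{(p)})$ and $\mathcal{X} := M^{\vee}$, which is finitely generated over $\Lambda$ by the discussion preceding the statement. By definition $M$ is $\Lambda$-cotorsion exactly when $\op{rank}_{\Lambda}\mathcal{X}=0$. Under the identification $\gamma-1\leftrightarrow x$, one has $M^{\Gamma}=M[\gamma-1]=M[x]$, and Pontryagin duality (applied to $0\to M[x]\to M\xrightarrow{\,x\,} M$) gives $(M^{\Gamma})^{\vee}\cong \mathcal{X}/x\mathcal{X}$; hence $M^{\Gamma}$ is finite if and only if $\mathcal{X}/x\mathcal{X}$ is finite. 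The first step I would record as a short lemma: \emph{if $\mathcal{X}$ is finitely generated over $\Lambda$ and $\mathcal{X}/x\mathcal{X}$ is finite, then $\op{rank}_{\Lambda}\mathcal{X}=0$.} This is immediate from the structure theorem, since a nonzero free part $\Lambda^{r}$ contributes an infinite quotient $\Z_{p}^{r}$ to $\mathcal{X}/x\mathcal{X}$, while the finite kernel and cokernel of the pseudo-isomorphism alter this quotient only by finite groups; thus finiteness of $\mathcal{X}/x\mathcal{X}$ forces $r=0$. It therefore suffices to show that $M^{\Gamma}=\op{Sel}_{p^\infty}(A/K_{\op{cyc}}^{(p)})^{\Gamma}$ is finite.

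For this I would apply the control theorem in the good ordinary setting. Writing $\Gamma_{n}=\op{Gal}(K_{\op{cyc}}^{(p)}/K_{n}^{(p)})$, the natural restriction maps
\[
s_{n}\colon \op{Sel}_{p^\infty}(A/K_{n}^{(p)})\longrightarrow \op{Sel}_{p^\infty}(A/K_{\op{cyc}}^{(p)})^{\Gamma_{n}}
\]
have finite kernel and cokernel, of order bounded independently of $n$; the hypothesis that $A$ has good ordinary reduction at the primes above $p$ (together with $p\geq 5$) is precisely what makes the local terms at $p$ controllable. Taking $n=0$ yields a four-term exact sequence
\[
0 \to \op{ker} s_{0} \to \op{Sel}_{p^\infty}(A/K) \xrightarrow{\,s_{0}\,} \op{Sel}_{p^\infty}(A/K_{\op{cyc}}^{(p)})^{\Gamma} \to \op{coker} s_{0} \to 0.
\]
By hypothesis $\op{Sel}_{p^\infty}(A/K)$ is finite (indeed, $A(K)$ is finite since $\op{rank} A(K)=0$, so $A(K)\otimes \Q_p/\Z_p=0$ and the sequence \eqref{selmer ses} identifies $\op{Sel}_{p^\infty}(A/K)$ with $\Sha(A/K)[p^\infty]$, which is finite), and $\op{coker} s_{0}$ is finite by the control theorem. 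Hence $\op{Sel}_{p^\infty}(A/K_{\op{cyc}}^{(p)})^{\Gamma}$ is finite, being an extension of two finite groups. Combined with the lemma, this gives $\mathcal{X}/x\mathcal{X}$ finite, so $\op{rank}_{\Lambda}\mathcal{X}=0$ and $\op{Sel}_{p^\infty}(A/K_{\op{cyc}}^{(p)})$ is $\Lambda$-cotorsion.

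The main obstacle is the control theorem itself, namely the finiteness of $\op{ker} s_{0}$ and $\op{coker} s_{0}$. The kernel is governed by $H^{1}(\Gamma, A(K_{\op{cyc}}^{(p)})[p^\infty])$ together with a global-to-local comparison, and the cokernel by the cokernels of the local restriction maps at the primes in $S$. The delicate point is the prime(s) above $p$, where one uses the good ordinary filtration of the $p$-adic Tate module to identify the correct local condition and to show the relevant local cohomology groups are finite; at primes of bad reduction the corresponding analysis is comparatively routine. Since $A$ has good ordinary reduction above $p$, this is exactly the situation covered by the control theorem of Mazur \cite{mazur1972rational} and its extensions to abelian varieties, which I would cite rather than reprove. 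With the control theorem granted, the remaining steps are purely formal.
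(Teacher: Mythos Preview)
Your proposal is correct and follows essentially the same route as the paper: the paper simply cites results in \cite{coates1996kummer} and the proof of \cite[Theorem~2.8]{coates2000galois}, noting that Coates' control-theorem argument for elliptic curves generalizes verbatim to abelian varieties with good ordinary reduction at primes above $p$. What you have written is precisely an unpacking of that cited argument, so there is no substantive difference in approach.
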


\begin{proof}
The statement follows from results in \cite{coates1996kummer} and the proof of \cite[Theorem 2.8]{coates2000galois}. In fact, the argument in the proof of \cite[Theorem 2.8]{coates2000galois} generalizes verbatim to abelian varieties with good ordinary reduction at primes above $p$.
\end{proof}

\begin{Remark}\label{cotorsion remark}
More generally, it was conjectured by Mazur in \cite{mazur1972rational} that the Selmer group $\op{Sel}_{p^\infty}(A/K_{\op{cyc}}^{(p)})$ is always cotorsion over $\Zx$.
\end{Remark}

\subsection{}
We now introduce Iwasawa-invariants associated to $\Zx$-modules. Let $M$ be a cofinitely generated cotorsion $\Z_p\llbracket x\rrbracket$-module, i.e., the Pontryagin-dual \[M^{\vee}:=\op{Hom}(M, \Q_p/\Z_p)\] is a finitely generated and torsion $\Z_p\llbracket x\rrbracket$-module. Recall that a polynomial $f(x)\in \Zx$ is said to be \textit{distinguished} if it is a monic polynomial whose non-leading coefficients are all divisible by $p$. Note that all height $1$ prime ideals of $\Z_p\llbracket x\rrbracket$ are principal ideals $(a)$, where $a=p$ or $a=f(x)$, where $f(x)$ is an irreducible distinguished polynomial. 
According to the structure theorem for $\Zx$-modules (see \cite[Theorem 13.12]{washington1997introduction}), $M^{\vee}$ is pseudo-isomorphic to a finite direct sum of cyclic $\Zx$-modules, i.e., there is a map
\[
M^{\vee}\longrightarrow \left(\bigoplus_{i=1}^s \Zx/(p^{\mu_i})\right)\oplus \left(\bigoplus_{j=1}^t \Zx/(f_j(x)) \right)
\]
with finite kernel and cokernel.
Here, $\mu_i>0$ and $f_j(x)$ is a distinguished polynomial.
The \emph{characteristic ideal} of $M^\vee$ is generated by the element
\begin{equation}\label{char element}
f_{M}^{(p)}(x) = f_{M}(x) := p^{\sum_{i} \mu_i} \prod_j f_j(x),
\end{equation}
which is referred to as the \emph{characteristic element} associated to $M$.
The $\mu$-invariant of $M$ is defined as the power of $p$ in $f_{M}(x)$.
More precisely,
\[
\mu(M):=\begin{cases}
\sum_{i=1}^s \mu_i & \textrm{ if } s>0\\
0 & \textrm{ if } s=0.
\end{cases}
\]
The $\lambda$-invariant of $M$ is the degree of the characteristic element, i.e.,
\[
\lambda(M) :=\begin{cases}
\sum_{i=1}^s \deg f_i & \textrm{ if } s>0\\
0 & \textrm{ if } s=0.
\end{cases}
\]

Assuming that $\op{Sel}_{p^\infty}(A/K_{\op{cyc}}^{(p)})$ is cotorsion as a $\Zx$-module, set $\mu(A/K_{\op{cyc}}^{(p)})$ and $\lambda(A/K_{\op{cyc}}^{(p)})$ to be the $\mu$ and $\lambda$-invariants of the Selmer group $\op{Sel}_{p^\infty}(A/K_{\op{cyc}}^{(p)})$ respectively.

\section{Bounding the growth of rational points in the cyclotomic tower}\label{s 3}
\par Recall that $X$ is a nice curve over $K$ and $A$ is the Jacobian of $A$. Let $p$ be an odd prime. We assume throughout this section that
\begin{itemize}
    \item $A$ has good ordinary reduction at the primes above $p$,
    \item $\op{Sel}_{p^\infty}(A/K_{\op{cyc}}^{(p)})$ is cotorsion over $\Zx$ (see Theorem \ref{cotorsion zero} and Remark \ref{cotorsion remark}).
\end{itemize}

\subsection{Mordell's conjecture over $\Z_p$-extensions}

\par First, we recall the well--known result of Mazur, see \cite{mazur1972rational}.
\begin{Th}[Mazur]\label{mazur's theorem}
There exists $n_0$ such that $\op{rank} A(K_{n}^{(p)})=\op{rank} A(K_{n_0}^{(p)})$ for all $n>n_0$. Furthermore, $\op{rank} A(K_{n}^{(p)})$ is bounded above by the $\lambda$-invariant $\lambda(A/K_{\op{cyc}}^{(p)})$.
\end{Th}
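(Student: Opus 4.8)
The plan is to prove Mazur's theorem (Theorem \ref{mazur's theorem}) by relating the Mordell--Weil rank of $A$ over each finite layer $K_n^{(p)}$ to the corank of the Selmer group, and then to control the growth of the Selmer corank using the structure theory of finitely generated torsion $\Zx$-modules together with a control theorem.

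First I would observe, via the short exact sequence \eqref{selmer ses}, that for each $n$ one has $\op{corank}_{\Z_p}\op{Sel}_{p^\infty}(A/K_n^{(p)}) = \op{rank} A(K_n^{(p)}) + \op{corank}_{\Z_p}\Sha(A/K_n^{(p)})[p^\infty]$, so in particular $\op{rank} A(K_n^{(p)}) \leq \op{corank}_{\Z_p}\op{Sel}_{p^\infty}(A/K_n^{(p)})$. Thus it suffices to bound the $\Z_p$-corank of the finite-layer Selmer groups. The key input here is a control theorem (in the style of Mazur, using good ordinary reduction at the primes above $p$): the natural restriction maps $\op{Sel}_{p^\infty}(A/K_n^{(p)}) \to \op{Sel}_{p^\infty}(A/K_{\op{cyc}}^{(p)})^{\Gamma_n}$, where $\Gamma_n = \op{Gal}(K_{\op{cyc}}^{(p)}/K_n^{(p)})$, have finite kernel and cokernel of bounded order as $n$ varies. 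Taking Pontryagin duals, this identifies the growth of $\op{corank}_{\Z_p}\op{Sel}_{p^\infty}(A/K_n^{(p)})$ with the growth of $\op{rank}_{\Z_p}\bigl(\op{Sel}_{p^\infty}(A/K_{\op{cyc}}^{(p)})^\vee\bigr)_{\Gamma_n}$, i.e.\ the $\Z_p$-rank of the $\Gamma_n$-coinvariants of the dual Selmer group.

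Next I would invoke the standing hypothesis that $X := \op{Sel}_{p^\infty}(A/K_{\op{cyc}}^{(p)})^\vee$ is a finitely generated torsion $\Zx$-module. By the structure theorem quoted in the excerpt, $X$ is pseudo-isomorphic to a sum of cyclic modules $\bigoplus_i \Zx/(p^{\mu_i}) \oplus \bigoplus_j \Zx/(f_j(x))$. Writing $\Gamma_n$ as generated by $\gamma^{p^n}$ and using the identification $\gamma \mapsto x+1$, one computes the $\Z_p$-rank of the $\Gamma_n$-coinvariants of each cyclic factor directly: the $p$-power factors contribute $\Z_p$-torsion (hence rank $0$), while each distinguished factor $\Zx/(f_j(x))$ contributes, for all sufficiently large $n$, exactly $\deg f_j$ to the $\Z_p$-rank and then stabilizes. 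Summing, $\op{rank}_{\Z_p}(X)_{\Gamma_n}$ is nondecreasing in $n$, is bounded above by $\sum_j \deg f_j = \lambda(A/K_{\op{cyc}}^{(p)})$, and is eventually constant; since a pseudo-isomorphism changes these ranks only by bounded finite amounts, the same conclusions hold for $X$ itself. Combining with the control theorem, $\op{corank}_{\Z_p}\op{Sel}_{p^\infty}(A/K_n^{(p)})$ stabilizes for $n \geq n_0$ and is bounded by $\lambda(A/K_{\op{cyc}}^{(p)})$, which via the inequality above yields both assertions about $\op{rank} A(K_n^{(p)})$.

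The main obstacle is establishing the control theorem with uniformly bounded kernels and cokernels: one must analyze the localization (restriction) maps at the primes above $p$ and at the bad primes, showing that the error terms arising from the cohomology of the local conditions are finite and bounded independently of $n$. For the primes above $p$ this is precisely where good ordinary reduction enters, via the filtration of $A[p^\infty]$ coming from the formal group and the analysis of $H^1$ of the inertia at $p$; for the bad primes away from $p$ one uses that the relevant local cohomology groups are finite and become eventually trivial in the tower. I would cite the control theorem from the ordinary-reduction literature (generalizing Mazur's original argument from elliptic curves to abelian varieties, exactly as in the references invoked for Theorem \ref{cotorsion zero}) rather than reprove it, and then the remaining module-theoretic computation of coinvariant ranks is routine.
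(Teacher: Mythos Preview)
Your sketch is the standard proof and is essentially correct. One small point you leave implicit: from the Selmer-corank bound you only get $\op{rank} A(K_n^{(p)})\leq \lambda(A/K_{\op{cyc}}^{(p)})$; the stabilization of the Mordell--Weil ranks themselves then follows because $\op{rank} A(K_n^{(p)})$ is nondecreasing in $n$ (the inclusions $K_n^{(p)}\subset K_{n+1}^{(p)}$ make it so), and a bounded nondecreasing sequence of integers is eventually constant. You might want to say this explicitly.

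As for the comparison: the paper does not give a proof of this statement at all. It is stated as a result of Mazur with a citation to \cite{mazur1972rational}, and the text immediately after only remarks that Mazur in fact proved more (finite generation of $A(K_{\op{cyc}}^{(p)})$) in the elliptic and CM cases. So your proposal is not being compared against an argument in the paper; rather, you have supplied a correct outline of the classical proof (control theorem plus structure theory of torsion $\Lambda$-modules) where the paper simply quotes the result.
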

In fact, Mazur showed that the entire Mordell--Weil group $A(K_{\op{cyc}}^{(p)})$ is finitely generated when $A$ is either an elliptic curve or an abelian variety with complex--multiplication. The above result crucially requires that the assumptions on $A$ made at the start of this section.
\begin{Definition}\label{def n0}
Denote by $n_0=n_0(p)$ the minimum value such that \[\op{rank} A(K_{n}^{(p)})=\op{rank} A(K_{n_0}^{(p)})\] for all $n>n_0$.
\end{Definition} Note that according to Mazur's result, if $\op{rank} A(K)=\lambda(A/K_{\op{cyc}}^{(p)})$, then, $n_0(p)=0$, i.e.,
 \[\op{rank}A(K_{\op{cyc}}^{(p)})=\op{rank}A(K).\]
 We also recall the result of Imai, see the main theorem and last few lines of \cite{imai1975remark}.
 \begin{Th}[Imai]
 Let $A$ be as above, then the torsion subgroup of $A(K_{\op{cyc}}^{(p)})$ is finite.
 \end{Th}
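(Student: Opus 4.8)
The plan is to handle the torsion subgroup one rational prime at a time, writing $A(K_{\op{cyc}}^{(p)})_{\op{tors}}=\bigoplus_\ell A(K_{\op{cyc}}^{(p)})[\ell^\infty]$ and proving that each $\ell$-primary part is finite while all but finitely many vanish. The entire argument hinges on one feature of the tower: $K_{\op{cyc}}^{(p)}/K$ is unramified away from $p$ and totally ramified at every prime $v\mid p$. Indeed, locally at such a $v$ it is the compositum $K_v\cdot \Q_{p,\op{cyc}}^{(p)}$ of $K_v$ with the totally ramified cyclotomic $\Z_p$-extension of $\Q_p$, so the residue field does not grow and every nontrivial subextension is ramified above $p$.

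For a prime $\ell\neq p$ I would argue purely through ramification, avoiding any reduction estimates. Let $P\in A(K_{\op{cyc}}^{(p)})$ have order a power of $\ell$; then $K(P)\subseteq K(A[\ell^\infty])\cap K_{\op{cyc}}^{(p)}$. By the criterion of N\'eron--Ogg--Shafarevich, $K(A[\ell^\infty])/K$ is unramified at every prime that divides neither $\ell$ nor the conductor of $A$; in particular, since $A$ has good reduction at $p$ and $p\neq \ell$, it is unramified at the primes above $p$. Hence $K(P)/K$ is unramified above $p$. But $K(P)/K$ is a subextension of $K_{\op{cyc}}^{(p)}/K$, which is totally ramified above $p$, so $K(P)=K$. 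Thus $A(K_{\op{cyc}}^{(p)})[\ell^\infty]=A(K)[\ell^\infty]$ for every $\ell\neq p$, and summing over such $\ell$ shows that the prime-to-$p$ part of $A(K_{\op{cyc}}^{(p)})_{\op{tors}}$ coincides with that of the finitely generated group $A(K)$, hence is finite.

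The essential case is $\ell=p$, and here I would pass to a completion. Fixing $v\mid p$ and writing $\mathcal K$ for the completion of $K_{\op{cyc}}^{(p)}$ above $v$, it suffices to bound $A(\mathcal K)[p^\infty]$, since $A(K_{\op{cyc}}^{(p)})[p^\infty]$ injects into it. Good ordinary reduction furnishes the connected--\'etale filtration of $p$-divisible groups over $K_v$,
\[0\rightarrow D^+\rightarrow A[p^\infty]\rightarrow D^-\rightarrow 0,\]
in which $D^-$ is \'etale (inertia acts trivially) and the connected part satisfies $\op{T}_p(D^+)\cong \Z_p(1)\otimes U^+$ for an unramified module $U^+$. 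Since taking $G_{\mathcal K}$-invariants is left exact, it is enough to bound $(D^+)^{G_{\mathcal K}}$ and $(D^-)^{G_{\mathcal K}}$ separately. As $\mathcal K/K_v$ is totally ramified, the residue field $\kappa$ is finite and $(D^-)^{G_{\mathcal K}}=D^-(\kappa)=\ker(\op{Frob}-1)$ is finite, because the Frobenius eigenvalues are Weil numbers of absolute value $\neq 1$ and hence are never roots of unity. For $(D^+)^{G_{\mathcal K}}$ I would split into two cases: if $\mu_{p^\infty}\not\subseteq \mathcal K$, then inertia acts on $D^+$ through the nontrivial cyclotomic character and the invariants are finite; if $\mu_{p^\infty}\subseteq \mathcal K$ (which can occur, e.g. when $\mu_p\subseteq K_v$), then $D^+$ becomes unramified over $\mathcal K$, isomorphic to $U^+\otimes\Q_p/\Z_p$, and $(D^+)^{G_{\mathcal K}}=\ker(\op{Frob}-1)$ is again finite since the Frobenius eigenvalues of $U^+$ are Weil numbers of absolute value $\neq 1$. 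Either way $A(\mathcal K)[p^\infty]$ is finite, and the theorem follows.

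The main obstacle is precisely this second case, where the local cyclotomic extension $\mathcal K$ swallows all of $\mu_{p^\infty}$; the na\"ive expectation, modeled on $\widehat{\mathbb G}_m$ (whose $p$-power torsion over $\mathcal K$ is then infinite), would suggest unbounded growth. What rescues the argument is that the connected part of an ordinary abelian variety is only a twist of multiplicative type by an unramified character whose Frobenius eigenvalues are Weil numbers: since no Weil number is a root of unity, $\op{Frob}-1$ stays injective on the relevant divisible module even after the cyclotomic character has been trivialized. Establishing the precise Galois structure $\op{T}_p(D^+)\cong\Z_p(1)\otimes U^+$ with $U^+$ unramified, and confirming that its Frobenius eigenvalues are genuine Weil numbers, is the technical heart of the proof.
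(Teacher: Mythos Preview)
The paper does not prove this statement; it simply records Imai's theorem and cites the original 1975 paper, so there is no proof in the paper to compare against. Your argument is essentially correct and in fact reproduces the standard proof due to Imai: the prime-to-$p$ torsion is handled by N\'eron--Ogg--Shafarevich together with the ramification behaviour of $K_{\op{cyc}}^{(p)}/K$ above $p$, and the $p$-primary torsion is controlled locally via the connected--\'etale filtration of the ordinary $p$-divisible group and the fact that the relevant Frobenius eigenvalues are Weil $q$-numbers, hence never equal to $1$.

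Two remarks. First, the assertion that $K_{\op{cyc}}^{(p)}/K$ is \emph{totally} ramified at every $v\mid p$ is slightly stronger than what holds for an arbitrary base field $K$; in general one only knows that the inertia subgroup at $v$ is open in $\Gamma\simeq\Z_p$, i.e.\ the extension is totally ramified above $p$ after passing to some finite layer $K_n^{(p)}$. This does not affect your argument: the maximal subextension of $K_{\op{cyc}}^{(p)}/K$ unramified above $p$ is still a fixed finite extension $K'$, so $A(K_{\op{cyc}}^{(p)})[\ell^\infty]\subseteq A(K')[\ell^\infty]$ remains finite, and the local analysis at $p$ goes through over $K'_v$ with the same finite residue field. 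Second, it is worth noting that the paper's own later ``generalization of Imai'' (for arbitrary pro-$p$ extensions under a big-image hypothesis and $A(K)[p]=0$) proceeds by a rather different, purely group-theoretic route via Lemmas~\ref{lem 1}--\ref{lem 3}; your approach, by contrast, genuinely uses good ordinary reduction at $p$ through the structure of the $p$-divisible group, which is what is needed to treat the $p$-part without assuming $A(K)[p]=0$.
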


 For ease of notation, set $\alpha(p)$ to denote the order of the torsion subgroup of $A(K_{\op{cyc}}^{(p)})$.
 
\begin{Th}\label{main th}
Let $X$ be a nice curve of genus $g>1$ defined over a number field $K$ and $A$ let be the Jacobian of $X$. Assume that the following conditions hold:
\begin{enumerate}
    \item $A$ has good ordinary reduction at the primes above $p$,
    \item $\op{Sel}_{p^\infty}(A/K_{\op{cyc}}^{(p)})$ is cotorsion over $\Zx$.
\end{enumerate}
Then, $X\left(K_{\op{cyc}}^{(p)}\right)$ is finite. Furthermore, assume that $X(K)\neq \emptyset$, and let $m_0$ be the minimum integer such that $X(K_n^{(p)})=X(K_{m_0}^{(p)})$ for all $n>m_0$. Then, we have that $m_0\leq n_0+\lfloor \op{log}_p \alpha(p) \rfloor$, where $n_0$ is defined above (see Definition \ref{def n0}). 
\end{Th}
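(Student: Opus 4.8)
The plan is to obtain finiteness from the Mordell--Lang theorem once $A(K_{\op{cyc}}^{(p)})$ is known to be finitely generated, and then to extract the quantitative bound by converting Mazur's rank-stabilization into a statement about the Galois action on $A(K_{\op{cyc}}^{(p)})\otimes\Q$ and counting Galois orbits of points of $X$.

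First I would record that $V:=A(K_{\op{cyc}}^{(p)})\otimes\Q$ is finite-dimensional over $\Q$ by Theorem \ref{mazur's theorem}, of dimension equal to the stabilized rank $r=\op{rank}A(K_{n_0}^{(p)})$, and that it carries a continuous action of $\Gamma$. Writing $\Gamma_{n_0}:=\op{Gal}(K_{\op{cyc}}^{(p)}/K_{n_0}^{(p)})$, the subspace $A(K_{n_0}^{(p)})\otimes\Q\hookrightarrow V$ also has dimension $r$, so it equals all of $V$; since $V$ is then spanned by $\Gamma_{n_0}$-fixed vectors, $\Gamma_{n_0}$ acts trivially on $V$. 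Fixing a topological generator $\delta$ of $\Gamma_{n_0}$, this means $(\delta-1)Q$ is torsion for every $Q\in A(K_{\op{cyc}}^{(p)})$. As the torsion subgroup is finite of order $\alpha(p)$ by Imai's theorem, $\alpha(p)(\delta-1)Q=0$, and since $\delta$ is a group automorphism this yields $\alpha(p)Q\in A(K_{\op{cyc}}^{(p)})^{\Gamma_{n_0}}=A(K_{n_0}^{(p)})$. Thus multiplication by $\alpha(p)$ carries $A(K_{\op{cyc}}^{(p)})$ into the finitely generated group $A(K_{n_0}^{(p)})$ with finite kernel, so $A(K_{\op{cyc}}^{(p)})$ is finitely generated. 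If $X(K_{\op{cyc}}^{(p)})=\emptyset$ the finiteness assertion is trivial; otherwise any point furnishes an Abel--Jacobi embedding $\iota\colon X\hookrightarrow A$ defined over a finite layer, under which $X(K_{\op{cyc}}^{(p)})$ maps into $\iota(X)\cap A(K_{\op{cyc}}^{(p)})$. Since $g>1$, $\iota(X)$ contains no translate of a positive-dimensional abelian subvariety, so this intersection is finite by the Mordell--Lang theorem (Faltings), proving that $X(K_{\op{cyc}}^{(p)})$ is finite.

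For the bound on $m_0$, I would use the hypothesis $X(K)\neq\emptyset$ to choose a base point $P_0\in X(K)$ and form $\iota(P)=[P-P_0]$, now defined over $K$ and hence $\Gamma$-equivariant. Given $P\in X(K_{\op{cyc}}^{(p)})$, set $Q=\iota(P)$. Because $\Gamma_{n_0}$ acts trivially on $V$, each $(\delta^i-1)Q$ is torsion, so the $\Gamma_{n_0}$-orbit of $Q$ lies in the single coset $Q+A(K_{\op{cyc}}^{(p)})_{\op{tors}}$; distinct orbit elements give distinct torsion translates, so the orbit has at most $\alpha(p)$ elements. Being a homogeneous space for $\Gamma_{n_0}\cong\Z_p$, the orbit has size a power of $p$, say $p^j\leq\alpha(p)$, whence $j\leq\lfloor\log_p\alpha(p)\rfloor$; the stabilizer of $Q$ is then $\Gamma_{n_0+j}$, so $P$ is defined over $K_{n_0+j}^{(p)}\subseteq K_{n_0+\lfloor\log_p\alpha(p)\rfloor}^{(p)}$. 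As $P$ was arbitrary, $X(K_{\op{cyc}}^{(p)})=X\big(K_{n_0+\lfloor\log_p\alpha(p)\rfloor}^{(p)}\big)$, and therefore $m_0\leq n_0+\lfloor\log_p\alpha(p)\rfloor$.

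The hard part will be the conceptual step recognizing that rank-stabilization forces $V=A(K_{n_0}^{(p)})\otimes\Q$, hence a trivial $\Gamma_{n_0}$-action on $V$, and that it is precisely Imai's finiteness of torsion that upgrades this rational statement to the integral one $\alpha(p)Q\in A(K_{n_0}^{(p)})$, ruling out infinite $p$-divisibility in the tower; the subsequent orbit count is then elementary. I would also flag that the unconditional finiteness of $X(K_{\op{cyc}}^{(p)})$ relies on the full Mordell--Lang theorem, and that it is the orbits of points of $X$ --- rather than arbitrary points of $A$ --- whose sizes control the layer of definition.
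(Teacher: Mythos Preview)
Your proof is correct and the core mechanism is the same as the paper's: once ranks stabilize at level $n_0$, the difference $Q-\sigma(Q)$ is torsion for every $\sigma\in\Gamma_{n_0}$, and Imai's bound $\alpha(p)$ on the torsion then caps the size of the $\Gamma_{n_0}$-orbit of $Q$ at a $p$-power $\le\alpha(p)$, forcing $Q$ to live at level $n_0+\lfloor\log_p\alpha(p)\rfloor$. The paper phrases this at a finite level (choosing the minimal $n'$ with $Q\in A(K_{n'}^{(p)})$ and counting over $\op{Gal}(K_{n'}^{(p)}/K_{n_0}^{(p)})$), while you phrase it via the trivial $\Gamma_{n_0}$-action on $V=A(K_{\op{cyc}}^{(p)})\otimes\Q$; these are the same argument.

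Two small points of contrast. First, the paper runs the orbit argument for arbitrary $Q\in A(K_{\op{cyc}}^{(p)})$ and deduces the stronger statement $A(K_{\op{cyc}}^{(p)})=A(K_{m_0}^{(p)})$, then pulls this back to $X$ via the Abel--Jacobi map; your orbit count, applied only to $Q=\iota(P)$, would yield the same conclusion for $A$ with no extra work. Second, for finiteness the paper avoids the general Mordell--Lang theorem: having shown $X(K_{\op{cyc}}^{(p)})=X(K_{m_0}^{(p)})$ (or, when $X(K)=\emptyset$, after replacing $K$ by a finite layer where $X$ acquires a point), classical Faltings over the number field $K_{m_0}^{(p)}$ suffices. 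Your route through ``$A(K_{\op{cyc}}^{(p)})$ is finitely generated, then Mordell--Lang'' is valid but invokes a stronger theorem than necessary; indeed your own orbit bound already reduces everything to a finite layer, making the Mordell--Lang step redundant in the case $X(K)\neq\emptyset$.
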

\begin{proof} Recall that $n_0$ is minimal such that
\[\op{rank} A(K_n^{(p)})=\op{rank} A(K_{n_0}^{(p)})\] for all $n>n_0$. Assume without loss of generality that for some $n>n_0$, 
\[A(K_n^{(p)})\neq A(K_{n_0}^{(p)}),\] and let $Q\in A(K_n^{(p)})$ be such that $Q\notin A(K_{n_0}^{(p)})$. Let $n'$ be such that $Q\in A(K_{n'}^{(p)})$ and $Q\notin A(K_{n'-1})$, we have that $n_0<n' \leq n$. Since \[\op{rank} A(K_{n'}^{(p)})=\op{rank} A(K_{n_0}^{(p)})\]
it follows that there is an integer $N>0$ such that $NQ\in A(K_{n_0}^{(p)})$.
\par For $\sigma\in \op{Gal}(K_{n'}^{(p)}/K_{n_0}^{(p)})$, the point $P_\sigma:=Q-\sigma(Q)$ is an $N$-torsion point in $A(K_{\op{cyc}}^{(p)})$. Moreover, as $\sigma$ ranges over $\op{Gal}(K_{n'}^{(p)}/K_{n_0}^{(p)})$ the points $\sigma(Q)$ are all distinct, since it is assumed that $Q\notin A(K_{n'-1})$. Hence, the points $P_\sigma$ are all distinct points in $A(K_{\op{cyc}}^{(p)})_{\op{tors}}$. According to Imai's theorem, $A(K_{\op{cyc}}^{(p)})_{\op{tors}}$ is finite. The number of points $P_\sigma$ produced is $[K_{n'}^{(p)}:K_{n_0}^{(p)}]=p^{n'-n_0}$. 

Hence, we find that $p^{n'-n_0}\leq \alpha(p)$, i.e., 
\[n'\leq n_0+\lfloor\log_p\alpha(p)\rfloor.\] 
Therefore, this shows that $A(K_{\op{cyc}}^{(p)})=A(K_{m_0}^{(p)})$, where $m_0\leq n_0+\lfloor\log_p\alpha(p)\rfloor$.
\par Assume first that $X(K)\neq \emptyset$. Given a point $P_0\in X(K)$, consider the standard embedding $X\hookrightarrow A$ mapping a point $P$ to $[P]:=P_0-P$. Suppose that $P\in X(K_n^{(p)})$, then, we know that $[P]=P_0-P\in A(K_{m_0}^{(p)})$. Hence, $P$ is defined over $K_{m_0}^{(p)}$, this shows that $X(K_n^{(p)})\subseteq X(K_{m_0}^{(p)})$, and hence, 
\[X(K_{\op{cyc}}^{(p)})=X(K_{m_0}^{(p)}).\]
\par It remains to show that $X\left(K_{\op{cyc}}^{(p)}\right)$ is finite when $X(K)=\emptyset $. Without loss of generality, $X\left(K_{\op{cyc}}^{(p)}\right)\neq \emptyset$, and thus, $X(K_N^{(p)})\neq \emptyset $ for some value of $N$. Replace $K$ with $K_N^{(p)}$ and the above argument shows that $X\left(K_{\op{cyc}}^{(p)}\right)$ is finite.
\end{proof}
Thus the Mordell conjecture over cyclotomic $\Z_p$-extensions follows from results of Mazur and Imai. However, the above result is more explicit, as it gives more precise information about growth. We note here that $A$ is an abelian variety of $\op{GL}_2$-type with supersingular reduction at the primes above $p$, it is shown by Lei and Ponsinet (see \cite{lei2020mordell}) that the Mordell--Weil group is stable in the cyclotomic tower.

\subsection{Torsion over pro-$p$ extensions}

\par One would like to better understand the quantity $m_0$ from the statement of Theorem \ref{main th}. We shall prove in section \ref{s 4} that $m_0=0$ under some additional conditions. The result of Imai shows that the torsion subgroup of $A(K_{\op{cyc}}^{(p)})$ is finite. In this section, we shall obtain more precise information about this group. As a result, we show that if $\widehat{\rho}$ has big image, then, $\alpha(p)$ is bounded as $p\rightarrow \infty$. Recall that $\bar{G}_\ell$ is the image of the mod-$\ell$ residual representation on $A[\ell]$. 
\par Throughout, consider a Galois extension $K_\infty$ of $K$ and assume that the Galois group $G=\op{Gal}(K_\infty/K)$ is pro-$p$. The lower central $p$-series of $G$ is recursively defined as follows: \[G_0:=G\text{ and }G_{n+1}:=G_n^p[G_n, G].\]
Set $K_n$ to be $K^{G_n}$, observe that $\op{Gal}(K_{n+1}/K_n)\simeq \left(\Z/p\Z\right)^{d_n}$ for some integer $d_n$.

\par The following result shows that if $\bar{G}_\ell$ is suitably large, then $A(K_\infty)[\ell^\infty]=0$.

\begin{Lemma}\label{lem 1}
Let $\ell$ be a prime such that
\begin{enumerate}
    \item $\bar{H}_\ell:=\bar{G}_\ell\cap \op{Sp}_{2g}(\F_\ell)$ does not have any (non-trivial) abelian $p$-quotients, 
    \item $A[\ell]^{\bar{H}_\ell}=0$,
\end{enumerate}
then, $A(K_\infty)[\ell^\infty]=0$. 
\end{Lemma}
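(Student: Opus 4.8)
The plan is to reduce the vanishing of $A(K_\infty)[\ell^\infty]$ to a statement about the Galois invariants of the single module $A[\ell]$, and then to extract the conclusion from a purely group-theoretic comparison of two subgroups of $\bar{G}_\ell$. First I would observe that since a nontrivial finite $\ell$-group contains nontrivial $\ell$-torsion, one has $A(K_\infty)[\ell^\infty]=0$ if and only if $A(K_\infty)[\ell]=0$; thus it suffices to prove $A[\ell]^{\op{Gal}(\bar{K}/K_\infty)}=0$. Setting $L:=K(A[\ell])$, so that $\op{Gal}(L/K)=\bar{G}_\ell$, and $M:=K_\infty\cap L$, the restriction map identifies the image of $\op{Gal}(\bar{K}/K_\infty)$ in $\bar{G}_\ell$ with $\bar{H}_\ell':=\op{Gal}(L/M)$, and hence $A[\ell]^{\op{Gal}(\bar{K}/K_\infty)}=A[\ell]^{\bar{H}_\ell'}$.

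Next I would record the basic properties of $\bar{H}_\ell'$. Since $K_\infty/K$ and $L/K$ are both Galois, so is $M/K$, whence $\bar{H}_\ell'\trianglelefteq\bar{G}_\ell$; and since $\op{Gal}(M/K)$ is a finite quotient of the pro-$p$ group $\op{Gal}(K_\infty/K)$, the quotient $\bar{G}_\ell/\bar{H}_\ell'\cong\op{Gal}(M/K)$ is a finite $p$-group. The crux is then to show $\bar{H}_\ell\subseteq \bar{H}_\ell'$. I would consider the composite $\bar{H}_\ell\hookrightarrow \bar{G}_\ell\twoheadrightarrow \bar{G}_\ell/\bar{H}_\ell'$; its image is isomorphic to $\bar{H}_\ell/(\bar{H}_\ell\cap\bar{H}_\ell')$ and lies inside a finite $p$-group, so it is a finite $p$-group quotient of $\bar{H}_\ell$. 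If this quotient were nontrivial it would, being a nontrivial finite $p$-group and hence nilpotent, admit a nontrivial abelian $p$-quotient; composing would then produce a nontrivial abelian $p$-quotient of $\bar{H}_\ell$, contradicting hypothesis (1). Therefore the image is trivial, i.e. $\bar{H}_\ell\subseteq\bar{H}_\ell'$.

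Finally, since invariants under a larger group are contained in invariants under a smaller one, the inclusion $\bar{H}_\ell\subseteq\bar{H}_\ell'$ gives $A[\ell]^{\bar{H}_\ell'}\subseteq A[\ell]^{\bar{H}_\ell}$, and the latter is $0$ by hypothesis (2). Combined with the first paragraph, this yields $A(K_\infty)[\ell^\infty]=0$, as desired.

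I expect the only genuinely delicate point to be the group-theoretic step $\bar{H}_\ell\subseteq\bar{H}_\ell'$: the hypothesis is phrased in terms of \emph{abelian} $p$-quotients, so the argument must pass from an a priori arbitrary $p$-group quotient of $\bar{H}_\ell$ to its abelianization (equivalently, its quotient by the Frattini subgroup) in order to reach the contradiction. The remaining steps—the reduction to $\ell$-torsion and the Galois-theoretic bookkeeping identifying $\bar{H}_\ell'$, verifying its normality, and checking that it has $p$-power index—are routine.
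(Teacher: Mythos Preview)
Your proof is correct and follows essentially the same approach as the paper: defining the image $U$ (your $\bar{H}_\ell'$) of $\op{Gal}(\bar{K}/K_\infty)$ in $\bar{G}_\ell$, showing $\bar{H}_\ell\subseteq U$ via the hypothesis on abelian $p$-quotients, and then concluding from $A[\ell]^{\bar{H}_\ell}=0$. In fact you supply more justification than the paper does---the paper simply asserts ``since $\bar{H}_\ell$ has no $p$-abelian quotients, $\bar{H}_\ell\cap U=\bar{H}_\ell$'' without spelling out the passage from an arbitrary $p$-group quotient to an abelian one, which you handle explicitly.
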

\begin{proof}
Let $\rho_\ell:\op{Gal}(\bar{K}/K)\rightarrow \bar{G}_\ell$ denote the mod-$\ell$ residual representation and set 
\[U:=\rho_\ell\left(\op{Gal}(\bar{K}/K_\infty)\right)\subseteq \bar{G}_\ell.\] Since $\bar{H}_\ell$ has no $p$-abelian quotients, $\bar{H}_\ell\cap U=\bar{H}_\ell$. Thus, $U$ contains $\bar{H}_\ell$, and hence, 
\[A(K_\infty)[\ell]=A[\ell]^U=0.\] Since $A(K_\infty)[\ell]=0$, it follows that $A(K_\infty)[\ell^\infty]=0$.
\end{proof}

\begin{Corollary}\label{cor simple}
Let $\ell$ be a prime such that $\bar{G}_\ell$ contains $\op{Sp}_{2g}(\F_\ell)$. Assume that either $g>2$ or $\ell>2$, then, $A(K_\infty)[\ell^\infty]=0$.
\end{Corollary}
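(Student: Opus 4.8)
The plan is to deduce the corollary directly from Lemma \ref{lem 1} by checking its two hypotheses for the subgroup $\bar{H}_\ell=\op{Sp}_{2g}(\F_\ell)$. Since $\bar{G}_\ell\supseteq \op{Sp}_{2g}(\F_\ell)$ by assumption, we have $\bar{H}_\ell=\bar{G}_\ell\cap \op{Sp}_{2g}(\F_\ell)=\op{Sp}_{2g}(\F_\ell)$, so it suffices to verify conditions (1) and (2) of the lemma for the full symplectic group acting on $A[\ell]\cong \F_\ell^{2g}$ through its standard symplectic representation; the conclusion $A(K_\infty)[\ell^\infty]=0$ then follows at once.

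For condition (2), I would argue that the standard representation of $\op{Sp}_{2g}(\F_\ell)$ on $\F_\ell^{2g}$ is irreducible and nontrivial, so that the space of invariants vanishes. Irreducibility follows from the transitivity of $\op{Sp}_{2g}(\F_\ell)$ on the nonzero vectors of $\F_\ell^{2g}$: any two nonzero vectors span one-dimensional subspaces on which the alternating form restricts identically to zero, so Witt's extension theorem produces a symplectic isometry carrying one to the other. A nonzero invariant vector would generate a one-dimensional invariant subspace, which is impossible once $2g\geq 2$; hence $A[\ell]^{\bar{H}_\ell}=0$.

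For condition (1), the key input is the group-theoretic fact that $\op{Sp}_{2g}(\F_\ell)$ is perfect, so that it has no nontrivial abelian quotient of any kind, and in particular no nontrivial abelian $p$-quotient. The symplectic group $\op{Sp}_{2g}(\F_\ell)$ is perfect for all pairs $(g,\ell)$ with $2g\geq 2$ except for the three small groups $\op{Sp}_2(\F_2)$, $\op{Sp}_2(\F_3)$ and $\op{Sp}_4(\F_2)\cong S_6$. Since $g>1$ forces $2g\geq 4$, the only relevant exception is $\op{Sp}_4(\F_2)$, that is, the case $g=2$ and $\ell=2$, which is precisely what the hypothesis ``$g>2$ or $\ell>2$'' removes. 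Under our hypotheses $\op{Sp}_{2g}(\F_\ell)$ is therefore perfect and condition (1) holds. (One could alternatively note that $\op{PSp}_{2g}(\F_\ell)$ is simple in this range and the center of $\op{Sp}_{2g}(\F_\ell)$ is $\{\pm 1\}$, so the abelianization is killed by $2$ and hence has no $p$-part for $p\geq 5$; but invoking perfectness matches the shape of the stated hypothesis and is cleanest.)

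With both hypotheses verified, Lemma \ref{lem 1} yields $A(K_\infty)[\ell^\infty]=0$. I expect the only genuinely delicate point to be the bookkeeping of the exceptional symplectic groups: one must be certain that the list of non-perfect $\op{Sp}_{2g}(\F_\ell)$ is complete and that the stated restriction excludes exactly the offending group $\op{Sp}_4(\F_2)$ while retaining everything that is needed. The irreducibility and invariants computation for the standard module, and the final appeal to the lemma, are then entirely routine.
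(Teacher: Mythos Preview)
Your proof is correct and follows the same overall strategy as the paper: verify the two hypotheses of Lemma~\ref{lem 1} for $\bar{H}_\ell=\op{Sp}_{2g}(\F_\ell)$ and conclude. The paper's own argument is terser and in fact uses precisely the alternative you note in parentheses: it cites the simplicity of $\op{PSp}_{2g}(\F_\ell)$ (for $g>2$ or $\ell>2$), observes that the abelianization of $\op{Sp}_{2g}(\F_\ell)$ therefore has order dividing $|Z(\op{Sp}_{2g}(\F_\ell))|\le 2$, and uses the standing assumption that $p$ is odd to kill any abelian $p$-quotient; condition~(2) is left implicit. Your route via perfectness of $\op{Sp}_{2g}(\F_\ell)$ is equivalent and slightly cleaner, since it eliminates abelian quotients outright without appealing to the parity of $p$, and your explicit treatment of condition~(2) fills in what the paper takes for granted.
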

\begin{proof}
If either $g>2$ or $\ell>2$, then, $\op{PSp}_{2g}(\F_\ell)$ is simple (see \cite[Theorem 3.4.1]{o1978symplectic}). Since $p$ is odd, the result follows from Lemma \ref{lem 1}.
\end{proof}
\begin{Remark}
Suppose that $\widehat{\rho}$ has big image, then, $\bar{G}_\ell$ contains $\op{Sp}_{2g}(\F_\ell)$ for all but a finite exceptional set of primes $\Sigma$. The above result implies that $A(K_\infty)[\ell^\infty]=0$ outside the exceptional set.
\end{Remark}

The next result applies to all primes $\ell\neq p$ and shows that $\#A(K_\infty)[\ell^\infty]$
is bounded independent of $p$ and the extension $K_\infty$.
\begin{Lemma}\label{lem 2}
Let $\ell$ be any prime such that $\ell\neq p$ and $L$ be the field $K(A[\ell])$. Then, $A(K_\infty)[\ell^\infty]\subseteq A(L)[\ell^\infty]$.
\end{Lemma}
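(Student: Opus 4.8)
The plan is to take an arbitrary point $P\in A(K_\infty)[\ell^\infty]$, say of exact order $\ell^k$, and to prove that $P\in A(L)$; since $P$ is then an $\ell^\infty$-torsion point of $A(L)$, the containment $A(K_\infty)[\ell^\infty]\subseteq A(L)[\ell^\infty]$ follows. The whole argument reduces to showing that the degree $[L(P):L]$ equals $1$, and this I would establish by a coprimality argument: I will show that $[L(P):L]$ is simultaneously a power of $\ell$ and a power of $p$, whence the hypothesis $\ell\neq p$ forces $[L(P):L]=1$, i.e. $P\in A(L)$.

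For the first (the $\ell$-power) estimate, I would use that $L=K(A[\ell])$, so every element of $\op{Gal}(\bar{K}/L)$ acts trivially on $A[\ell]$. Choosing a basis identifying $A[\ell^k]\cong(\Z/\ell^k\Z)^{2g}$, the action of $\op{Gal}(\bar{K}/L)$ on $A[\ell^k]$ therefore factors through the principal congruence subgroup $\ker\!\left(\op{GSp}_{2g}(\Z/\ell^k\Z)\to \op{GSp}_{2g}(\F_\ell)\right)$, whose order is a power of $\ell$. Hence the image of $\op{Gal}(\bar{K}/L)$ in $\op{Aut}(A[\ell^k])$ is a finite $\ell$-group. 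By orbit--stabilizer, the orbit of $P$ has size $[\op{Gal}(\bar{K}/L):\op{Gal}(\bar{K}/L(P))]=[L(P):L]$, which divides the order of this $\ell$-group; thus $[L(P):L]$ is a power of $\ell$.

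For the second (the $p$-power) estimate, I would use that $P\in A(K_\infty)$ gives $K(P)\subseteq K_\infty$ and hence $L(P)=L\cdot K(P)\subseteq L\cdot K_\infty$. Since $K_\infty/K$ is Galois, restriction embeds $\op{Gal}(LK_\infty/L)$ isomorphically onto the closed subgroup $\op{Gal}(K_\infty/(K_\infty\cap L))$ of $G=\op{Gal}(K_\infty/K)$, which is pro-$p$; thus $\op{Gal}(LK_\infty/L)$ is pro-$p$. The field $L(P)$ corresponds to the open subgroup $\op{Gal}(LK_\infty/L(P))$, and $[L(P):L]$ is exactly its index. As the index of an open subgroup in a pro-$p$ group is a power of $p$, we conclude $[L(P):L]$ is a power of $p$. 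Comparing the two estimates and invoking $\ell\neq p$ gives $[L(P):L]=1$, completing the proof. The point requiring the most care is that $L(P)/L$ need not be Galois, so neither degree bound can be read off a Galois group of $L(P)/L$ directly; I circumvent this in each case by passing through a genuinely group-theoretic index—orbit--stabilizer inside the $\ell$-group image on one side, and the index of an open subgroup in the pro-$p$ group $\op{Gal}(LK_\infty/L)$ on the other.
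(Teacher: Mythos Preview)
Your proof is correct and rests on the same key observation as the paper's: the action of $\op{Gal}(\bar{K}/L)$ on $A[\ell^m]$ factors through the principal congruence subgroup, an $\ell$-group, while $LK_\infty/L$ is pro-$p$, so $\ell\neq p$ collapses the relevant extension to $L$. The only difference is packaging: the paper works with the full torsion field $L(m):=K(A[\ell^m])$, notes that $L(m)/L$ has $\ell$-power degree, and concludes $L(m)\cap LK_\infty=L$, hence $A(L)[\ell^m]=A(LK_\infty)[\ell^m]$ for every $m$ (an equality, slightly stronger than the stated containment); you instead argue pointwise via orbit--stabilizer on a single $P$, which is equally valid but, as you correctly flag, requires the extra care about $L(P)/L$ not being Galois.
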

\begin{proof}
Let $L(m)$ be the field $K(A[\ell^m])$, which is the field fixed by the kernel of the Galois representation 
\[\varrho_m:=\rho_{\ell^m}: \op{Gal}(\bar{K}/K)\rightarrow \op{GSp}_{2g}(\Z/\ell^m\Z).\] Given an extension $\mathcal{K}/K$, set $H_{m,\mathcal{K}}$ to denote $\varrho_m\left(\op{Gal}(\bar{K}/\mathcal{K})\right)$. Note that by construction, \begin{equation}\label{boring equation}A(\mathcal{K})[\ell^m]=A[\ell^m]^{H_{m,\mathcal{K}}}.\end{equation}
Also note that since $L:=K(A[\ell])$, 
\[H_{m,L}=\left(\op{image}\varrho_m\right)\cap \mathcal{U}_m,\]
where 
\[\mathcal{U}_m:=\op{ker}\left(\op{GSp}_{2g}(\Z/\ell^m\Z)\rightarrow \op{GSp}_{2g}(\Z/\ell\Z)\right).\]
Observe that $\mathcal{U}_m$ is pro-$\ell$, and hence so is $H_{m,L}$. On the other, by construction $\varrho_m$ induces an isomorphism $\op{Gal}(L(m)/L)\simeq H_{m,L}$, hence, $L(m)/L$ is a pro-$\ell$ extension. On the other hand, $L_\infty:=L\cdot K_\infty$ is a pro-$p$ extension of $L$, and since $p\neq \ell$, we have that $L_\infty\cap L(m)=L$. Therefore, we have that 
\[H_{m,L}= \op{Gal}(L(m)/L)=\op{Gal}(L(m)/L_\infty\cap L(m))=H_{m,L_\infty},\] and as a consequence of \eqref{boring equation}, we deduce that 
\[A(L)[\ell^m]=A(L_\infty)[\ell^m],\] and thus, taking the union of all $\ell^m$-torsion,
\[A(L)[\ell^\infty]=A(L_\infty)[\ell^\infty].\]
This proves the result.
\end{proof}

\begin{Lemma}\label{lem 3}
Suppose that $A(K)[p^\infty]=0$, then, we have that $A(K_\infty)[p^\infty]=0$.
\end{Lemma}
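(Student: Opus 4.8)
The plan is to reduce the statement to the classical fact that a pro-$p$ group acting on a nonzero finite $p$-group must fix a nonzero element. First I would observe that it suffices to prove $A(K_\infty)[p]=0$: any nontrivial abelian $p$-group contains an element of order $p$, so $A(K_\infty)[p^\infty]\neq 0$ would already force $A(K_\infty)[p]\neq 0$. Thus the whole problem collapses to controlling the honest $p$-torsion.

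Next I would set $M:=A(K_\infty)[p]$ and record its structure. It is a finite $\F_p$-vector space, being a subgroup of $A[p]\cong(\Z/p\Z)^{2g}$. Since $K_\infty/K$ is Galois, the group $G=\op{Gal}(K_\infty/K)$ acts on $M$; and because $M$ is finite, its field of definition is a \emph{finite} subextension of $K_\infty/K$, so the action factors through a finite quotient $\bar{G}$ of $G$. As $G$ is pro-$p$, the quotient $\bar{G}$ is a finite $p$-group. Finally, because $(K_\infty)^G=K$, the group of $G$-invariants is exactly $M^G=A(K)[p]$.

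The crux is then the orbit-counting (class equation) argument. If $M\neq 0$, then $|M|$ is a positive power of $p$. Writing $|M|=|M^{\bar{G}}|+\sum(\text{nontrivial orbit sizes})$, where every nontrivial $\bar{G}$-orbit has size divisible by $p$, I obtain $|M^{\bar{G}}|\equiv 0\pmod p$. Since $0\in M^{\bar{G}}$, this forces $|M^{\bar{G}}|\geq p$, that is $M^G=A(K)[p]\neq 0$. This contradicts the hypothesis $A(K)[p^\infty]=0$, which in particular gives $A(K)[p]=0$. Hence $M=A(K_\infty)[p]=0$, and the reduction of the first paragraph yields $A(K_\infty)[p^\infty]=0$.

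I do not expect a serious obstacle here; the argument is essentially formal. The only points needing genuine care are the two verifications that make the abstract fixed-point lemma applicable: the continuity/finiteness check ensuring the $G$-action on $M$ factors through a finite $p$-group, and the identification $M^G=A(K)[p]$, which relies on $K_\infty/K$ being Galois. Both are routine given the standing assumptions.
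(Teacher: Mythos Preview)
Your proof is correct and follows essentially the same idea as the paper: both reduce to the standard fact that a finite $p$-group acting on a nonzero finite abelian $p$-group has nonzero invariants, and both identify the invariants with $A(K)[p^\infty]$ (or $A(K)[p]$). The only cosmetic difference is how finiteness is arranged---the paper passes to a finite layer $K_m$ and works with $A(K_m)[p^\infty]$, whereas you stay at $K_\infty$ but restrict to $A(K_\infty)[p]\subseteq A[p]$; in either case one lands in the setting of a finite $p$-group acting on a finite $p$-module and invokes the class equation (the paper cites \cite[Proposition~1.6.12]{neukirch2013cohomology} for this, which you have spelled out directly).
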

\begin{proof}
For any value of $m>0$, it suffices to show that $A(K_{m})[p^\infty]=0$. Letting $M=A(K_{m})[p^\infty]$ and $\mathcal{G}_m=\op{Gal}(K_m/K)$, we have that $M^{\mathcal{G}_m}=0$. Since $\mathcal{G}_m$ is a finite $p$-group, it follows that $M=0$, see \cite[Proposition 1.6.12]{neukirch2013cohomology}. 
\end{proof}

We obtain the following corollary to Lemmas \ref{lem 1}, \ref{lem 2} and \ref{lem 3}. Given an abelian variety $A_{/K}$, we let $\Sigma=\Sigma_A$ be the finite set of primes $\ell$ such that $\bar{G}_\ell$ does \emph{not} contain $\op{Sp}_{2g}(\F_\ell)$, and given $\ell\in \Sigma$, let $K_{A,\ell}:=K(A[\ell])$.

\begin{Th}\label{alpha p bounded}
Let $A$ be any abelian variety over $K$ such that $\widehat{\rho}$ has big image. Given a prime $p$ at which $A(K)[p^\infty]=0$, we have the following bound on $\alpha(p):=\#A(K_{\op{cyc}}^{(p)})_{\op{tors}}$
\begin{equation}\label{new bound on  alpha}\alpha(p)\leq \prod_{\ell\in \Sigma} \#\left( A(K_{A,\ell})[\ell^\infty]\right).\end{equation}

\end{Th}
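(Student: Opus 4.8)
The plan is to decompose the finite group $A(K_{\op{cyc}}^{(p)})_{\op{tors}}$ into its $\ell$-primary components and bound each one separately, exploiting that $\Gamma=\op{Gal}(K_{\op{cyc}}^{(p)}/K)\cong\Z_p$ is pro-$p$, so that $K_\infty=K_{\op{cyc}}^{(p)}$ is an admissible input to Lemmas \ref{lem 1}, \ref{lem 2}, and \ref{lem 3}. Since $A(K_{\op{cyc}}^{(p)})_{\op{tors}}$ is finite by Imai's theorem, we have
\[
A(K_{\op{cyc}}^{(p)})_{\op{tors}}=\bigoplus_\ell A(K_{\op{cyc}}^{(p)})[\ell^\infty],
\]
so $\alpha(p)=\prod_\ell \#A(K_{\op{cyc}}^{(p)})[\ell^\infty]$ with all but finitely many factors equal to $1$. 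It therefore suffices to show that the only factors exceeding $1$ are those indexed by $\ell\in\Sigma$ with $\ell\neq p$, and that each such factor is at most $\#\left(A(K_{A,\ell})[\ell^\infty]\right)$.

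First I would dispose of the prime $\ell=p$: by hypothesis $A(K)[p^\infty]=0$, so Lemma \ref{lem 3} gives $A(K_{\op{cyc}}^{(p)})[p^\infty]=0$ and this factor is trivial. For a prime $\ell\neq p$, Lemma \ref{lem 2} applied with $K_\infty=K_{\op{cyc}}^{(p)}$ yields $A(K_{\op{cyc}}^{(p)})[\ell^\infty]\subseteq A(K_{A,\ell})[\ell^\infty]$, which supplies the desired bound on each factor. Finally, for $\ell\notin\Sigma$ the big image hypothesis ensures $\bar{G}_\ell\supseteq\op{Sp}_{2g}(\F_\ell)$, and I would invoke Corollary \ref{cor simple} to conclude $A(K_{\op{cyc}}^{(p)})[\ell^\infty]=0$ whenever $g>2$ or $\ell>2$. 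Assembling these three observations, only the primes $\ell\in\Sigma$ with $\ell\neq p$ contribute nontrivially, and adjoining the harmless factor for $\ell=p$ when $p\in\Sigma$ gives the stated product over all of $\Sigma$.

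The main obstacle is the edge case not covered by Corollary \ref{cor simple}, namely $\ell=2$ with $g\leq 2$, where $\op{PSp}_{2g}(\F_2)$ fails to be simple (indeed $\op{PSp}_2(\F_2)\cong S_3$ and $\op{PSp}_4(\F_2)\cong S_6$). For such $\ell\notin\Sigma$ I would instead verify the two hypotheses of Lemma \ref{lem 1} for $\bar{H}_\ell=\op{Sp}_{2g}(\F_2)$ directly: since $p$ is odd, the abelianization of $S_3$ or $S_6$ is a $2$-group and hence admits no nontrivial abelian $p$-quotient, giving condition (1); and because $\op{Sp}_{2g}(\F_2)$ acts transitively on the nonzero vectors of $\F_2^{2g}$, we have $A[2]^{\op{Sp}_{2g}(\F_2)}=0$, giving condition (2). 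Lemma \ref{lem 1} then forces $A(K_{\op{cyc}}^{(p)})[2^\infty]=0$ in these cases as well, so no prime outside $\Sigma$ ever contributes and the bound \eqref{new bound on  alpha} follows.
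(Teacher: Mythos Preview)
Your proof is correct and follows essentially the same strategy as the paper's: decompose $\alpha(p)$ into $\ell$-primary parts, kill the $p$-part via Lemma~\ref{lem 3}, bound the $\ell\neq p$ parts via Lemma~\ref{lem 2}, and eliminate the primes $\ell\notin\Sigma$ via Corollary~\ref{cor simple}. The paper's proof simply invokes Corollary~\ref{cor simple} for all $\ell\notin\Sigma\cup\{p\}$ without singling out the case $\ell=2$, $g\le 2$; your additional paragraph handling this edge case directly through Lemma~\ref{lem 1} (using that $\op{Sp}_{2g}(\F_2)\cong S_3$ or $S_6$ has $2$-group abelianization and acts transitively on nonzero vectors) is a welcome bit of extra care rather than a deviation in approach. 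One minor remark: you do not actually need to appeal to Imai's theorem to justify the primary decomposition, since the subsequent lemmas already show that each $\ell$-primary component is finite and that all but finitely many vanish.
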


\begin{proof}
Since $\widehat{\rho}$ has big image, it follows from Corollary \ref{cor simple} that $A(K_{\op{cyc}}^{(p)})[\ell^\infty]=0$ for all primes $\ell\notin \Sigma\cup \{p\}$. Therefore, we find that 
\[\alpha(p)=\#A(K_{\op{cyc}}^{(p)})_{\op{tors}}\leq \prod_{\ell\in \Sigma\cup \{p\}} \#\left(A(K_{\op{cyc}}^{(p)})[\ell^\infty]\right).\]For $\ell\neq p$, it follows from Lemma \ref{lem 2} that $\#A(K_{\op{cyc}}^{(p)})[\ell^\infty]$ is bounded by $\# A(K_{A,\ell})[\ell^\infty]$. Since $A(K)[p^\infty]=0$ by assumption, it follows from Lemma \ref{lem 3} that $\#A(K_{\op{cyc}}^{(p)})[p^\infty]=0$. Putting everything together, we obtain the result.
\end{proof}
Clearly, \eqref{new bound on alpha} implies that $\alpha(p)$ is bounded as $p\rightarrow \infty$, which is a result originally due to Ribet (see \cite{ribet1981torsion}) and does not require the big image hypothesis. We shall generalize this result to arbitrary pro-$p$ extensions $K_\infty/K$, see Theorem \ref{boundedness}. The hypothesis is in place here so that we obtain the effective bound \eqref{new bound on  alpha}, which is useful in making our results effective.
We obtain the following result. 

\begin{Th}\label{th 2}
Let $X$ be as in Theorem \ref{main th}, $A$ its Jacobian and $n_0(p)$ as in Definition \ref{def n0}. Assume that $\widehat{\rho}$ has big image and that $X(K)\neq \emptyset$. Then, for $p\gg 0$ at which the conditions of Theorem \ref{main th} are satisfied,
\[X(K_{\op{cyc}}^{(p)})=X(K_{n_0(p)}).\]
In fact, we have a more effective statement. Recall that $\Sigma=\Sigma_A$ be the finite set of primes $\ell$ such that $\bar{G}_\ell$ does \emph{not} contain $\op{Sp}_{2g}(\F_\ell)$, and for $\ell\in \Sigma$, we have set $K_{A,\ell}:=K(A[\ell])$. Then, for all primes $p$ such that 
\begin{enumerate}
    \item $A(K)[p^\infty]=0$
    \item $p>\prod_{\ell\in \Sigma} \#\left( A(K_{A,\ell})[\ell^\infty]\right)$,
\end{enumerate} the above equality holds.
\end{Th}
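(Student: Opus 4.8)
The plan is to obtain this as an effective corollary of Theorems \ref{main th} and \ref{alpha p bounded}, the entire content being the observation that the big-image bound on $\alpha(p)$ is independent of $p$, so that the floor term in the estimate $m_0 \leq n_0 + \lfloor \log_p \alpha(p)\rfloor$ collapses to zero. First I would invoke Theorem \ref{alpha p bounded}: since $\widehat{\rho}$ has big image and $A(K)[p^\infty]=0$, we get
\[ \alpha(p) \leq \prod_{\ell\in\Sigma} \#\left( A(K_{A,\ell})[\ell^\infty]\right) =: C. \]
The key point is that $C=C_A$ is a finite constant not depending on $p$: the set $\Sigma$ of primes where $\bar{G}_\ell \not\supseteq \op{Sp}_{2g}(\F_\ell)$ is finite by the big-image hypothesis, and each factor $\#A(K_{A,\ell})[\ell^\infty]$ depends only on $A$ and $\ell$.

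Next, under hypothesis (2), namely $p>C$, we have $\alpha(p)\leq C<p$, so $\log_p\alpha(p)<1$ and hence $\lfloor\log_p\alpha(p)\rfloor=0$. Feeding this into the bound of Theorem \ref{main th} gives $m_0\leq n_0$. Since $K_{m_0}^{(p)}\subseteq K_{n_0}^{(p)}\subseteq K_{\op{cyc}}^{(p)}$ and Theorem \ref{main th} furnishes $A(K_{\op{cyc}}^{(p)})=A(K_{m_0}^{(p)})$, the chain of inclusions $A(K_{m_0}^{(p)})\subseteq A(K_{n_0}^{(p)})\subseteq A(K_{\op{cyc}}^{(p)})=A(K_{m_0}^{(p)})$ forces $A(K_{\op{cyc}}^{(p)})=A(K_{n_0}^{(p)})$. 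Using $X(K)\neq\emptyset$, I would then fix $P_0\in X(K)$ and take the Abel--Jacobi embedding $X\hookrightarrow A$, $P\mapsto [P_0-P]$, which is defined over $K$; for any $P\in X(K_{\op{cyc}}^{(p)})$ the image $[P_0-P]$ lies in $A(K_{\op{cyc}}^{(p)})=A(K_{n_0}^{(p)})$, so $P\in X(K_{n_0}^{(p)})$, giving the effective equality $X(K_{\op{cyc}}^{(p)})=X(K_{n_0(p)}^{(p)})$.

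For the asymptotic statement, I would argue that both numbered conditions hold for $p\gg 0$: condition (1), $A(K)[p^\infty]=0$, fails only for the finitely many primes dividing $\#A(K)_{\op{tors}}$, and condition (2), $p>C$, holds for every $p$ exceeding the fixed constant $C$. Hence for all large $p$ at which the two hypotheses of Theorem \ref{main th} are already assumed, the effective version applies. There is no serious obstacle here, as the result is essentially a corollary of the preceding theorems; the only points demanding care are the recognition that $C$ is genuinely $p$-independent (so that a single inequality annihilates the floor term for all large $p$ uniformly) and the short chain-of-inclusions step upgrading $m_0\leq n_0$ to the exact identity $A(K_{\op{cyc}}^{(p)})=A(K_{n_0}^{(p)})$, together with the fact that the Abel--Jacobi map, being defined over $K$, transports the field of definition faithfully.
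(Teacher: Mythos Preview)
Your proposal is correct and follows essentially the same approach as the paper's proof, combining Theorem~\ref{main th} with the uniform bound on $\alpha(p)$ from Theorem~\ref{alpha p bounded} to force the floor term $\lfloor\log_p\alpha(p)\rfloor$ to vanish. One minor redundancy: the \emph{statement} of Theorem~\ref{main th} already gives $X(K_{\op{cyc}}^{(p)})=X(K_{m_0}^{(p)})$ (it is only its \emph{proof} that establishes the analogous equality for $A$), so once $m_0\leq n_0$ you can run your chain-of-inclusions argument directly on $X$ rather than on $A$, making the separate Abel--Jacobi step at the end unnecessary.
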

\begin{proof}
The result follows from Theorem \ref{main th} and Theorem \ref{alpha p bounded}. Since $\alpha(p)$ is bounded as $p\rightarrow \infty$, it follows that $\lfloor \op{log}_p \alpha(p)\rfloor=0$ for $p\gg 0$. Since $n_0(p)\leq m_0(p)\leq n_0(p)+\lfloor \op{log}_p \alpha(p)\rfloor$, we find that $m_0(p)=n_0(p)$ for primes $p\gg 0$ (at which the conditions of Theorem \ref{main th} are satisfied). Moreover, if $A(K)[p^\infty]=0$, then it follows from Theorem \ref{alpha p bounded} that $\lfloor \op{log}_p \alpha(p)\rfloor=0$ for 
\[p>\prod_{\ell\in \Sigma} \#\left( A(K_{A,\ell})[\ell^\infty]\right).\]

Hence, if $A(K)[p^\infty]=0$ and \[p>\prod_{\ell\in \Sigma} \#\left( A(K_{A,\ell})[\ell^\infty]\right),\] we find that \[X(K_{\op{cyc}}^{(p)})=X(K_{n_0(p)}).\]
\end{proof}

\begin{Remark}
If $\op{rank} A(K)=\lambda(A/K_{\op{cyc}}^{(p)})$, then, $n_0(p)=0$. In the next section, we study the case when $\op{rank} A(K)=0$ and show that in this case, there is an explicit criterion for $\lambda(A/K_{\op{cyc}}^{(p)})=0$.
\end{Remark}

\subsection{A generalization of Imai's theorem}
\par Some of the results proven in the previous subsection generalize to arbitrary pro-$p$ extensions of $K$. Recall that $K_\infty$ is a pro-$p$ extension of $K$.
\begin{Th}\label{generalization of Imai}
Let $A$ be an abelian variety defined over $K$ such that the image of $\widehat{\rho}$ is large. Let $K_\infty$ be any pro-$p$ extension of $K$. Furthermore assume that $A(K)$ has no $p$-torsion. Then, the torsion subgroup of $A(K_\infty)$ is finite.
\end{Th}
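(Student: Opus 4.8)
The plan is to pass to the primary decomposition of the torsion subgroup
\[
A(K_\infty)_{\op{tors}} = \bigoplus_\ell A(K_\infty)[\ell^\infty]
\]
and to show that all but finitely many summands vanish while each of the remaining summands is finite. Since $\widehat{\rho}$ has big image, $\bar{G}_\ell$ contains $\op{Sp}_{2g}(\F_\ell)$ for every prime $\ell$ outside the finite set $\Sigma=\Sigma_A$. The argument is essentially the one used in the proof of Theorem \ref{alpha p bounded}, except that $K_{\op{cyc}}^{(p)}$ is replaced by the arbitrary pro-$p$ extension $K_\infty$; the crucial point is that Lemmas \ref{lem 1}, \ref{lem 2}, \ref{lem 3} and Corollary \ref{cor simple} are all stated for a general pro-$p$ extension $K_\infty$, so they apply verbatim.

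First I would dispose of the generic primes. For every prime $\ell\notin \Sigma\cup\{2,p\}$, the group $\bar{G}_\ell$ contains $\op{Sp}_{2g}(\F_\ell)$ and $\ell>2$, so Corollary \ref{cor simple} gives $A(K_\infty)[\ell^\infty]=0$. The prime $\ell=2$ is deliberately set aside, since $\op{PSp}_{2g}(\F_2)$ fails to be simple when $g\leq 2$ and $A$ here is a general abelian variety; being a single prime, it is harmlessly absorbed into the finite exceptional set treated below. Thus all but finitely many summands in the primary decomposition are zero, and it remains to bound the $\ell$-primary parts for $\ell\in \Sigma\cup\{2,p\}$.

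Next I would treat these remaining primes in two cases. For $\ell=p$, the hypothesis $A(K)[p^\infty]=0$ together with Lemma \ref{lem 3} yields $A(K_\infty)[p^\infty]=0$, so the $p$-primary part even vanishes. For each prime $\ell\in \Sigma\cup\{2\}$ with $\ell\neq p$ (which holds automatically as $p\geq 5$), Lemma \ref{lem 2} gives the containment $A(K_\infty)[\ell^\infty]\subseteq A(L)[\ell^\infty]$, where $L=K(A[\ell])$ is a number field; since $A(L)$ is finitely generated by the Mordell--Weil theorem, its torsion subgroup is finite, and hence so is $A(K_\infty)[\ell^\infty]$.

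Assembling these, $A(K_\infty)_{\op{tors}}$ is a direct sum of finitely many finite groups indexed by the primes $\ell\in \Sigma\cup\{2\}$ with $\ell\neq p$, and is therefore finite. I do not anticipate a serious obstacle: the substantive content is already contained in the preceding lemmas, and the only points requiring care are the separate treatment of $\ell=2$ when $g\leq 2$ and the observation that finiteness of the $\ell$-primary torsion for $\ell\in\Sigma$ comes from Mordell--Weil over the fixed number field $K(A[\ell])$ rather than from any Iwasawa-theoretic input. It is precisely this last feature that allows the conclusion to hold for an arbitrary pro-$p$ extension, and not merely the cyclotomic one.
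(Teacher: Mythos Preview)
Your proof is correct and follows essentially the same route as the paper's: both invoke Corollary \ref{cor simple} for generic $\ell$, Lemma \ref{lem 2} for the finitely many exceptional $\ell\neq p$, and Lemma \ref{lem 3} for $\ell=p$. Your version is slightly more careful in explicitly isolating $\ell=2$ (since Corollary \ref{cor simple} requires $g>2$ or $\ell>2$) and in naming the Mordell--Weil theorem as the source of finiteness for $A(K(A[\ell]))[\ell^\infty]$, but these are elaborations rather than differences in strategy.
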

\begin{proof}
The result follows from Corollary \ref{cor simple} and Lemmas \ref{lem 1}, \ref{lem 2} and \ref{lem 3}. In greater detail, since $\widehat{\rho}$ is assumed to have big image, we have that $\bar{G}_\ell$ contains $\op{Sp}_{2g}(\F_\ell)$ for all $\ell$ outside a finite set of primes $\Sigma$. Corollary \ref{cor simple} and Lemma \ref{lem 1} then imply that $A(K_\infty)[\ell^\infty]=0$ for all but a finite set of primes $\ell$. Lemma \ref{lem 2} asserts that $A(K_\infty)[\ell^\infty]$ is finite for all primes $\ell\neq p$. Finally, Lemma \ref{lem 3} shows that $A(K_\infty)[p^\infty]=0$. Putting everything together, we obtain the result.
\end{proof}
Note that the above result applies without any assumption on the reduction type of $A$ and hence is a partial improvement of Imai's result even for $\Z_p$-extensions, in the special case when $A(K)[p]=0$. In general, one cannot prove that the rank of an abelian variety stabilizes in an arbitary pro-$p$ extension, however, there are some special cases in which this is the case, see the results in \cite{HL}. When the rank of the Jacobian of a curve stabilizes in a general pro-$p$ extension, the above result and the argument in the proof of Theorem \ref{main th} shows that the number of points on the curve should be finite. Furthermore, we have the following result which shows that the order of the torsion subgroup of $A(K_\infty)$ is bounded independent of $p$ and the choice of pro-$p$ extension $K_\infty$.

\begin{Th}\label{boundedness}
Let $A$ be an abelian variety defined over a number field $K$ such that $\widehat{\rho}$ has big image. Then, there is a constant $C>0$ which depends only on $A$ such that for all primes $p\gg 0$ and all pro-$p$ extensions $K_\infty/K$, we have that 
\[\# A(K_\infty)_{\op{tors}}<C.\]
\end{Th}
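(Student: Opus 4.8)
The plan is to break the torsion subgroup into its primary components, $A(K_\infty)_{\op{tors}}=\bigoplus_{\ell} A(K_\infty)[\ell^\infty]$, and bound each $\ell$-primary factor by a quantity that depends only on $A$ and not on $p$ or on the choice of pro-$p$ extension $K_\infty$. This follows the template of Theorem \ref{alpha p bounded}, the only new input being that $K_\infty$ is now an arbitrary pro-$p$ extension rather than the cyclotomic one; the three Lemmas \ref{lem 1}, \ref{lem 2}, \ref{lem 3} and Corollary \ref{cor simple} were all stated for general pro-$p$ $K_\infty$, so they apply directly. The crux is that all but finitely many factors vanish, and the surviving factors are controlled uniformly.

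First I would fix the relevant constants. Since $A$ is an abelian variety over the number field $K$, the Mordell--Weil theorem gives that $A(K)_{\op{tors}}$ is finite, so there is $P_0=P_0(A)$ with $A(K)[p^\infty]=0$ for every prime $p>P_0$; enlarging $P_0$, I also ensure that $P_0$ exceeds every prime in the finite set $\Sigma=\Sigma_A$ (finite because $\widehat\rho$ has big image). Assume henceforth $p>P_0$, so $p\notin\Sigma$ and $A(K)[p^\infty]=0$. Now I treat the three ranges of $\ell$. For the $p$-part, Lemma \ref{lem 3} applied to $A(K)[p^\infty]=0$ gives $A(K_\infty)[p^\infty]=0$. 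For an odd prime $\ell\neq p$ with $\ell\notin\Sigma$, one has $\bar G_\ell\supseteq\op{Sp}_{2g}(\F_\ell)$ and $\ell>2$, so Corollary \ref{cor simple} yields $A(K_\infty)[\ell^\infty]=0$. The primes not yet covered all lie in the fixed finite set $\Sigma'=\Sigma\cup\{2\}$, and each such $\ell$ satisfies $\ell\neq p$ since $p>P_0$; for these, Lemma \ref{lem 2} bounds $\#A(K_\infty)[\ell^\infty]\leq \#A\bigl(K(A[\ell])\bigr)[\ell^\infty]$, a finite number depending only on $A$. Multiplying over all $\ell$, the nontrivial factors come only from $\Sigma'$, giving
\[
\#A(K_\infty)_{\op{tors}}\;\leq\;\prod_{\ell\in\Sigma'}\#A\bigl(K(A[\ell])\bigr)[\ell^\infty],
\]
which is finite and independent of both $p$ and $K_\infty$; taking $C$ to be one more than this product completes the argument.

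Every step reduces to an already-established result, so the content is organizational: the hypothesis $p\gg 0$ serves precisely to push $p$ beyond $\Sigma$ (so that the only nonvanishing contributions come from the fixed finite set $\Sigma'$) and to kill the $p$-primary part through finiteness of $A(K)_{\op{tors}}$ together with Lemma \ref{lem 3}. The one point needing care — the closest thing to an obstacle — is the prime $\ell=2$ in low genus $g\le 2$, where Corollary \ref{cor simple} does not apply; I handle this uniformly by folding $\ell=2$ into $\Sigma'$ and invoking Lemma \ref{lem 2} instead, at the cost of only the $p$-independent factor $\#A\bigl(K(A[2])\bigr)[2^\infty]$.
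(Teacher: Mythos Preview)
Your proof is correct and follows essentially the same approach as the paper: decompose into $\ell$-primary parts, kill the $p$-part via Lemma~\ref{lem 3}, kill the $\ell\notin\Sigma$ parts via Corollary~\ref{cor simple}, and bound the remaining finitely many $\ell$-parts via Lemma~\ref{lem 2}. Your treatment is in fact slightly more careful than the paper's, since you explicitly fold $\ell=2$ into $\Sigma'$ to cover the case $g\le 2$ where Corollary~\ref{cor simple} does not directly apply.
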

\begin{proof}
Let $p\gg 0$ be such that $A(K)[p]=0$. Then, $A(K_\infty)[p^\infty]=0$ by Lemma \ref{lem 3}. Let $\Sigma$ be the finite set of primes outside which $\bar{G}_\ell$ contains $\op{Sp}_{2g}(\F_\ell)$. Then, by Lemma \ref{lem 1} and Corollary \ref{cor simple}, $A(K_\infty)[\ell^\infty]=0$. Finally, for each prime $\ell\in \Sigma$, it follows from Lemma \ref{lem 2} that $A(K_\infty)[\ell^\infty]\subseteq A\left(K(A[\ell])\right)$, and hence, its order is bounded independent of $p$ and $K_\infty$. The result follows.
\end{proof}
\section{The special case when the Jacobian has rank zero}\label{s 4}
\par In this section, we assume that $\op{rank} A(K)=0$. In this case, we show that there is an explicit criterion for $n_0(p)=0$. In other words, there is an explicit criterion for 
\[\op{rank} A(K_n^{(p)})=\op{rank} A(K)=0\] for all $n$. If $\widehat{\rho}$ has big image, then by Theorem \ref{th 2}, $m_0(p)=n_0(p)$ for primes $p\gg 0$ above which $A$ has good ordinary reduction. Hence, for $p\gg 0$, this gives an explicit criterion for $A(K_n)=A(K)$ for all $n$, and consequently, a criterion for $X(K_{\infty})$ to equal $X(K)$.
\subsection{Criterion for $X(K_{\op{cyc}}^{(p)})$ to equal $X(K)$}

\par Note that when $\op{rank} A(K)=0$, the Selmer group $\op{Sel}_{p^\infty}(A/K)$ isomorphic to the $p$-primary part of the Tate-Shafarevich group $\Sha(E/K)$. Assuming that $\Sha(E/K)[p^\infty]$ is finite, it follows from Theorem \ref{cotorsion zero} that the Selmer group $\op{Sel}_{p^\infty}(A/K_{\op{cyc}}^{(p)})$ is cotorsion over $\Zx$. Assume further that $A$ has good ordinary reduction at the primes above $p$.

\par According to Mazur's result, $\op{rank} A(K_n^{(p)})\leq \lambda(A/K_{\op{cyc}}^{(p)})$ for all $n>0$. Therefore, if $\lambda(A/K_{\op{cyc}}^{(p)})=0$, then, $\op{rank} A(K_n^{(p)})=0$ for all $n>0$. Let $f(x)$ be the characteristic element of $\op{Sel}_{p^\infty}(A/K_{\op{cyc}}^{(p)})$. Let $|\cdot|_p$ be the absolute value, normalized by $|p|_p^{-1}=p$. At a prime $v|p$, denote by $\F_v$ the residue field at $v$. Set $\widetilde{A}(\F_v)$ to be the group of $\F_v$-points of the reduced abelian variety $\widetilde{A}$ (at $v$). It follows from \cite[Theorem 2']{schneider1985p} (also see the proof of the Euler characteristic formula in \cite[section 3]{coates2000galois}) that $f(0)\neq 0$ and
\begin{equation}\label{ECF}f(0)= \frac{\#\Sha(A/K)[p^\infty]\times \prod_{v\nmid p} c_v^{(p)}(A/K)\times \left(\prod_{v\mid p}\#\widetilde{A}(\F_v)[p^\infty]\right)^2 }{\left(\#A(K)[p^\infty]\right)^2}.\end{equation}

 Here, $\prod_{v\nmid p}c_v(A/K)$ is the Tamagawa number product of $A$ taken at the primes $v\nmid p$, and $c_v^{(p)}(A/K)$ is the $p$-primary part of the Tamagawa number $c_v(A/K)$, defined by
\[c_v^{(p)}(A/K):=|c_v(A/K)|_p^{-1}.\] Note that the formula above is slightly different from that of \emph{loc. cit.} since in this context, $A$ being the Jacobian of a curve, is isomorphic to its own dual.

\begin{Definition}
Given an abelian variety $A_{/K}$ and a prime $p$ above which $A$ has good reduction, we say that $p$ is \emph{anomalous} if $p\mid \#\widetilde{A}(\F_v)$ for some prime $v\mid p$.
\end{Definition}
For elliptic curves $E$ defined over a number field $K$, the set of anomalous primes makes up $0\%$ of all primes, see \cite{murty1997modular}. We are not aware if a generalization of this result exists in the literature for abelian varieties of dimension $>1$, however, a related question is studied in \cite{cojocaru2017arithmetic, bloom2018square}.

\begin{Lemma}\label{lambda is zero}
Let $A$ be an abelian variety defined over a number field $K$ and assume that
\begin{enumerate}
    \item $\Sha(A/K)$ is finite,
    \item $\op{rank} A(K)=0$.
\end{enumerate} Then, for all but finitely many non-anomalous primes $p$ above which $A$ has good ordinary reduction, we have that $\lambda(A/K_{\op{cyc}}^{(p)})=0$. More precisely, if $p$ is a non-anomalous prime such that
\begin{enumerate}
    \item $A$ has good ordinary reduction at $p$,
    \item $\Sha(A/K)[p^\infty]=0$
    \item $p\nmid \prod_{v\nmid p} c_v(A/K)$,
    \end{enumerate}
    then, $\lambda(A/K_{\op{cyc}}^{(p)})=0$.

\end{Lemma}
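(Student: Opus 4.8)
The plan is to reduce the vanishing of $\lambda(A/K_{\op{cyc}}^{(p)})$ to the single statement that the constant term $f(0)$ of the characteristic element $f(x)=f_A^{(p)}(x)$ of $\op{Sel}_{p^\infty}(A/K_{\op{cyc}}^{(p)})$ is a $p$-adic unit, and then to extract that unit statement from the Euler characteristic formula \eqref{ECF}. The first ingredient is purely structural. Writing $f(x)=p^{\mu}\prod_{j=1}^{t} f_j(x)$ as in \eqref{char element}, each $f_j$ is a distinguished polynomial of positive degree, so its constant term is divisible by $p$, i.e. $v_p(f_j(0))\geq 1$. Hence
\[
v_p(f(0))=\mu+\sum_{j=1}^{t} v_p(f_j(0))\geq \mu+t,
\]
and since all the summands are nonnegative, $v_p(f(0))=0$ forces $\mu=0$ and $t=0$. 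As $\lambda(A/K_{\op{cyc}}^{(p)})=\sum_j\deg f_j$ is an empty sum in that case, this would give $\lambda(A/K_{\op{cyc}}^{(p)})=0$ (and incidentally $\mu=0$ as well). So the whole problem reduces to proving $v_p(f(0))=0$.

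For this I would substitute the hypotheses into \eqref{ECF}. Non-anomalousness of $p$ gives $\#\widetilde{A}(\F_v)[p^\infty]=1$ for every $v\mid p$, so the factor $\left(\prod_{v\mid p}\#\widetilde{A}(\F_v)[p^\infty]\right)^2$ is trivial; hypothesis (2) gives $\#\Sha(A/K)[p^\infty]=1$; and hypothesis (3), namely $p\nmid \prod_{v\nmid p}c_v(A/K)$, gives $c_v^{(p)}(A/K)=\lvert c_v(A/K)\rvert_p^{-1}=1$ for all $v\nmid p$. Thus the numerator of the right-hand side of \eqref{ECF} is a $p$-adic unit, and the formula identifies $f(0)$, up to a unit in $\Z_p^{\times}$, with $\left(\#A(K)[p^\infty]\right)^{-2}$. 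Since $f(x)\in\Z_p[x]$ we have $f(0)\in\Z_p$ and therefore $v_p(f(0))\geq 0$; comparing valuations forces $v_p\!\left(\#A(K)[p^\infty]\right)=0$, that is $A(K)[p^\infty]=0$, and simultaneously $v_p(f(0))=0$. Combining this with the structural observation of the previous paragraph yields $\lambda(A/K_{\op{cyc}}^{(p)})=0$, which is the \emph{precise} form of the statement.

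Finally, for the first ``all but finitely many'' assertion I would check that conditions (1)--(3) hold for all but finitely many primes: good ordinary reduction is part of the running assumption; since $\Sha(A/K)$ is finite, condition (2) fails only for the finitely many $p$ dividing $\#\Sha(A/K)$; and the Tamagawa product $\prod_v c_v(A/K)$ is a single fixed integer (the local factors are trivial at all but the finitely many bad primes), so $p\mid \prod_{v\nmid p}c_v(A/K)$ for only finitely many $p$. Intersecting with the non-anomalous good ordinary primes gives the general statement from the precise one.

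The genuinely delicate point is the valuation bookkeeping in the middle step: one must notice that the integrality $f(0)\in\Z_p$, together with the vanishing of the numerator of \eqref{ECF}, does double duty — it both forces $A(K)[p^\infty]$ to be trivial and pins $v_p(f(0))$ to zero. Everything else, including the reduction to ``$f(0)$ is a unit,'' is formal once the distinguished-polynomial structure of $f$ is used.
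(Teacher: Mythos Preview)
Your proof is correct and follows the same route as the paper: reduce to showing $f(0)\in\Z_p^\times$ via the distinguished-polynomial structure of the characteristic element, then read this off from the Euler characteristic formula \eqref{ECF}. The paper is terser and does not spell out your integrality observation that forces $\#A(K)[p^\infty]=1$ from $f(0)\in\Z_p$; your treatment of that denominator is a genuine clarification, but the overall strategy is identical.
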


\begin{proof}
Note that $f(x)$ is a power of $p$ times a distinguished polynomial, see \eqref{char element}. Therefore, if $f(0)$ is a unit in $\Z_p$, then $f(x)=1$. Note that $\lambda(A/K_{\op{cyc}}^{(p)})$ is the degree of this distinguished polynomial, and hence, if $f(0)\in \Z_p^\times$, then $\lambda(A/K_{\op{cyc}}^{(p)})=0$. It is easy to see from \eqref{ECF} that if $p$ satisfies the above conditions, then $f(0)$ is a unit in $\Z_p$.
\end{proof}

\begin{Th}\label{main 2}
Let $X$ be a nice curve of genus $g>1$ defined over a number field $K$ and $A$ be the Jacobian of $X$. Assume that the following conditions hold:
\begin{enumerate}
    \item $\op{rank} A(K)=0$,
    \item $\Sha(A/K)$ is finite,
    \item $\widehat{\rho}$ has big image in the sense of Definition \ref{big image def},
    \item $X(K)\neq \emptyset$.
\end{enumerate}
Then, for all non-anomalous primes $p\gg 0$ above which $A$ has good ordinary reduction,
\begin{equation}\label{rank zero equality}X(K_{\op{cyc}}^{(p)})=X(K).\end{equation}
Recall that $\Sigma=\Sigma_A$ is the finite set of primes $\ell$ such that $\bar{G}_\ell$ does \emph{not} contain $\op{Sp}_{2g}(\F_\ell)$, and for $\ell\in \Sigma$, we have set $K_{A,\ell}:=K(A[\ell])$. We have that for all non-anomalous primes $p$ such that 
\begin{enumerate}
    \item $A(K)[p^\infty]=0$
    \item $p>\prod_{\ell\in \Sigma} \#\left( A(K_{A,\ell})[\ell^\infty]\right)$,
    \item $\Sha(A/K)[p^\infty]=0$
    \item $p\nmid \prod_{v\nmid p} c_v(A/K)$.
\end{enumerate} the above equality \eqref{rank zero equality} holds.
\end{Th}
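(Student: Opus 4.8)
The plan is to assemble Theorem \ref{main 2} entirely from the machinery already established in Sections \ref{s 3} and \ref{s 4}, so that the argument reduces to checking that the listed hypotheses feed correctly into the earlier effective results. The guiding observation is that the conclusion $X(K_{\op{cyc}}^{(p)})=X(K)$ is exactly the statement that $m_0(p)=0$ in the notation of Theorem \ref{main th}; and since $m_0(p)\le n_0(p)+\lfloor\log_p\alpha(p)\rfloor$, it suffices to force both summands to vanish. So the entire proof is the conjunction of two effective vanishing statements: $n_0(p)=0$ and $\lfloor\log_p\alpha(p)\rfloor=0$.

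First I would handle $n_0(p)=0$. Since $\op{rank} A(K)=0$ by hypothesis (1) and $\Sha(A/K)$ is finite by hypothesis (2), Theorem \ref{cotorsion zero} guarantees that $\op{Sel}_{p^\infty}(A/K_{\op{cyc}}^{(p)})$ is cotorsion over $\Zx$, so the Iwasawa invariants are defined. By Mazur's Theorem \ref{mazur's theorem} we have $\op{rank} A(K_n^{(p)})\le \lambda(A/K_{\op{cyc}}^{(p)})$ for all $n$, hence it is enough to show $\lambda(A/K_{\op{cyc}}^{(p)})=0$: that collapses the rank to $0=\op{rank} A(K)$ at every level, forcing $n_0(p)=0$. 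This is precisely the content of Lemma \ref{lambda is zero}, whose three explicit hypotheses are $A$ having good ordinary reduction at $p$, the condition $\Sha(A/K)[p^\infty]=0$, and $p\nmid\prod_{v\nmid p}c_v(A/K)$, together with $p$ being non-anomalous. For the asymptotic statement these hold for all but finitely many non-anomalous good-ordinary primes (the finitely many exceptions come from primes dividing $\#\Sha(A/K)$ or the global Tamagawa product); for the effective statement they are exactly the listed conditions (3) and (4), plus the standing non-anomalous and good-ordinary assumptions.

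Next I would handle $\lfloor\log_p\alpha(p)\rfloor=0$, i.e. $\alpha(p)=1$, i.e. $A(K_{\op{cyc}}^{(p)})_{\op{tors}}$ is trivial. Hypothesis (3), that $\widehat\rho$ has big image, lets me invoke Theorem \ref{alpha p bounded}: whenever $A(K)[p^\infty]=0$ we have the effective bound $\alpha(p)\le\prod_{\ell\in\Sigma}\#\bigl(A(K_{A,\ell})[\ell^\infty]\bigr)$. For the asymptotic claim, the right-hand side is a fixed constant depending only on $A$, so $\alpha(p)<p$ for all large $p$, whence $\lfloor\log_p\alpha(p)\rfloor=0$ once $A(K)[p^\infty]=0$ (which holds for $p\gg0$ since $A(K)_{\op{tors}}$ is finite); for the effective claim, the inequality $p>\prod_{\ell\in\Sigma}\#(A(K_{A,\ell})[\ell^\infty])$ of condition (2) together with $A(K)[p^\infty]=0$ of condition (1) gives $\alpha(p)<p$ directly, so again $\lfloor\log_p\alpha(p)\rfloor=0$.

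With both summands shown to vanish, Theorem \ref{main th} (applicable since the two cotorsion-and-good-ordinary conditions hold, and $X(K)\neq\emptyset$ by hypothesis (4)) yields $m_0(p)=0$, that is $X(K_{\op{cyc}}^{(p)})=X(K_0^{(p)})=X(K)$, as required. I do not expect any genuine obstacle here, since every ingredient is already proved in the excerpt; the only point demanding care is bookkeeping the two hypothesis lists, making sure the conditions of Lemma \ref{lambda is zero} and of Theorem \ref{alpha p bounded} are each met by the right items of the effective hypothesis list, and confirming that the finitely many excluded primes in the asymptotic statement are precisely those where one of these auxiliary finiteness or non-divisibility conditions fails.
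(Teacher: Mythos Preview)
Your proposal is correct and follows essentially the same route as the paper: invoke Theorem~\ref{cotorsion zero} to get cotorsion, use Lemma~\ref{lambda is zero} to force $\lambda(A/K_{\op{cyc}}^{(p)})=0$ and hence $n_0(p)=0$, and then use the torsion bound from Theorem~\ref{alpha p bounded} to kill $\lfloor\log_p\alpha(p)\rfloor$; the paper packages this last step by citing Theorem~\ref{th 2} directly, whereas you unwind it back to Theorem~\ref{main th}, but the content is identical. One small slip: you write ``$\lfloor\log_p\alpha(p)\rfloor=0$, i.e.\ $\alpha(p)=1$'', but the correct equivalence is $\alpha(p)<p$, which is in fact what you go on to prove, so the argument itself is unaffected.
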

\begin{proof}
Let $p\geq 5$ be a prime above which $A$ has good ordinary reduction. According to Theorem \ref{cotorsion zero}, the Selmer group $\op{Sel}_{p^\infty}(A/K_{\op{cyc}}^{(p)})$ is cotorsion over $\Zx$. Assuming that $p\gg 0$ is non-anomalous, it follows from Lemma \ref{lambda is zero} that $n_0(p)=0$. Theorem \ref{th 2} then asserts that 
\[X(K_{\op{cyc}}^{(p)})=X(K),\] for all primes $p$ such that 
\begin{enumerate}
    \item $A(K)[p^\infty]=0$
    \item $p>\prod_{\ell\in \Sigma} \#\left( A(K_{A,\ell})[\ell^\infty]\right)$.
\end{enumerate}
\end{proof}

\begin{Remark}
Note that when the rank of $A(K_{\op{cyc}}^{(p)})$ is zero, then the Manin-Mumford conjecture implies that $X(K_{\op{cyc}}^{(p)})$ is finite. More generally, the conjecture asserts that $X(\bar{K})\cap A(\bar{K})_{\op{tors}}$ is finite. This result was proved by Raynaud in \cite{raynaud1983courbes}.
\end{Remark}

\subsection{An example}\label{example} We consider an explicit example. Pick the first curve in the list of \href{https://www.lmfdb.org/Genus2Curve/Q/?search_type=List&all=1}{genus 2 curves on LMFDB}, such that $\op{End}(A)=\Z$. This curve is given the equation 
\[X: y^2+(x^3+1)y=x^2+x.\] The Jacobian is an abelian surface of \emph{paramodular type}. It follows from Theorem \ref{serre pink} that $\widehat{\rho}$ has big image. For this abelian surface, $A(\Q)=\Z/14 \Z$. The analytic order of $\Sha(A/\Q)$ is approximated to $1$, therefore, it makes sense to assume that $\Sha(A/\Q)=0$. The Tamagawa product is equal to $1$.
\par According to Theorem \ref{generalization of Imai}, if $p$ is \emph{any} prime other than $2$ or $7$, $K$ is \emph{any} number field such that $A(K)[p]\neq 0$, and $K_\infty$ is \emph{any} pro-$p$ extension of $K$, then, the torsion subgroup of $A(K_\infty)$ is finite. For instance, given $p\notin \{2,7\}$, there are only finitely many quadratic extensions $K/\Q$ in which $A(K)[p]\neq 0$. This is because there are only finitely many quadratic extensions contained in the number field $\Q(A[p])$. More generally, if $K$ does not intersect $\Q(A[p])$, then the image of the residual representation does not change after base change to $K$. For any imaginary quadratic field $K$ such that $A(K)[p]=0$, one can consider the compositum of all $\Z_p$-extensions of $K$. This extension $K_\infty$ has Galois group $\op{Gal}(K_\infty/K)\simeq \Z_p^2$. Theorem \ref{generalization of Imai} applies to show that $A(K_\infty)_{\op{tors}}$ is finite.
\par According to Theorem \ref{main 2}, we have that for all non-anomalous primes $p\gg 0$ above which $A$ has good ordinary reduction, \begin{equation}\label{last eqn}X(\Q_{\op{cyc}}^{(p)})=X(\Q).\end{equation}

\bibliographystyle{abbrv}
\bibliography{references}

\end{document}